\newtheorem{theorem}{Theorem}[section]
\newtheorem{lemma}[theorem]{Lemma}
\newtheorem{cor}[theorem]{Corollary}
\newtheorem{prop}[theorem]{Proposition}
\theoremstyle{remark}
\newtheorem{remark}[theorem]{Remark}
\newcommand{\Z}{\mathbb{Z}}
\newcommand{\R}{\mathbb{R}}
\newcommand*{\be}{\begin{equation}}
\newcommand*{\ee}{\end{equation}}
\newcommand{\nn}{\nonumber}
\providecommand{\abs}[1]{\lvert#1\rvert}
\newcommand*{\fl}[1]{\lfloor{#1}\rfloor}
\def\ind{\mathbf{1}}
\DeclareMathOperator{\Var}{Var}
\DeclareMathOperator{\Cov}{Cov}
\newcommand{\ep}{\varepsilon}
\newcommand\eps{\varepsilon}
\renewcommand\epsilon{\varepsilon}
\newcommand*{\Ev}{\mathbf E}
\newcommand*{\Vv}{{\text{\bf Var}}} 
\newcommand*{\Pv}{\mathbf P}
\def\sc{\mathbf x}
\def\N{\mathbb N}
 \def\wt{\widetilde}   
\begin{document}

\title[Scaling of KPZ/Stochastic Burgers]{Scaling exponent 
for the Hopf-Cole solution 
of KPZ/Stochastic Burgers}

\author[M. Bal\'azs]{M. Bal\'azs}
\address{Department of Stochastics\\
Budapest University of Technology and Economics}
\thanks{M. Bal\'azs is  supported by   the Hungarian Scientific
Research Fund (OTKA) grants K-60708, F-67729, by the Bolyai Scholarship
of the Hungarian Academy of Sciences, and by Morgan Stanley Mathematical
Modeling Center.}
\email{balazs@math.bme.hu}
\urladdr{www.math.bme.hu/$\sim$balazs}
\author[
J. Quastel]{
J. Quastel}
\address{Departments of Mathematics and Statistics\\
University of Toronto
}
\thanks{J. Quastel is supported by the Natural Sciences and Engineering Research Council of Canada. }
\email{quastel@math.toronto.edu}
\urladdr{www.math.toronto.edu/quastel}
\author[T. Sepp\"al\"ainen]{T. Sepp\"al\"ainen}
\address{Department of Mathematics\\
University of Wisconsin--Madison }
\thanks{T. Sepp\"al\"ainen is supported by National Science Foundation grant DMS-0701091 and by the Wisconsin Alumni Research Foundation.} 
\email{seppalai@math.wisc.edu}
\urladdr{www.math.wisc.edu/$\sim$seppalai}
\subjclass[2000]{Primary 60H15, 82C22; Secondary 35R60,60K35}

\keywords{Kardar-Parisi-Zhang equation, stochastic heat equation,
stochastic Burgers equation,  random growth, asymmetric exclusion process, anomalous fluctuations, directed polymers. }

\date{\today}




\begin{abstract}
We consider the stochastic heat equation
\[\label{1}\partial_tZ= \partial_x^2 Z - Z \dot W
\]
on the real line, where $\dot W$ is  space-time white noise.  $h(t,x)=-\log Z(t,x)$ is interpreted as a solution of the KPZ
equation, and $u(t,x)=\partial_x h(t,x)$ as a solution of the stochastic Burgers equation. We take  $Z(0,x)=\exp\{B(x)\}$ where $B(x)$ is a two-sided Brownian motion, corresponding to the stationary solution of the stochastic Burgers equation. We show that there
exist $0< c_1\le c_2 <\infty$ such that
\[
c_1t^{2/3}\le \Var (\log Z(t,x) )\le c_2 t^{2/3}.
\]
Analogous results are obtained for some moments of the correlation functions of $u(t,x)$. In particular, it is
shown that the excess diffusivity satisfies
\[
c_1t^{1/3}\le D(t) \le c_2 t^{1/3}.\] 
The proof uses approximation  by weakly asymmetric simple exclusion processes, for which we obtain the
microscopic analogies of the results  by coupling.
\end{abstract}
\maketitle

\section{Introduction}
\label{intro}
\setcounter{equation}{0}

\subsection{Physical background.}  The Kardar-Parisi-Zhang (KPZ) equation
\cite{KPZ}
 is a formal stochastic partial differential equation for
a random function $h(t,x)$, $t>0$, $x\in {\mathbb R}$,
\begin{equation}
\partial_t h = -\lambda (\partial_xh)^2 + \nu \partial_x^2 h + \sigma \dot W
\label{KPZ0}
\end{equation}
where $\nu>0$ and $\sigma,\lambda\neq 0$ are fixed parameters and $\dot W(t,x)$ is  Gaussian space-time white noise
\[
E[ \dot W(t,x) \dot W(s,y)] = \delta(t-s)\delta(y-x).
\]
 It is widely studied in physics as a model of
randomly growing interfaces. 
  The derivative $u=\partial_x h$ should satisfy the  stochastic Burgers equation,
\begin{equation}\label{SBE}
\partial_t u = - \lambda\partial_x u^2 + \nu \partial_x^2 u + \sigma \partial_x \dot W.
\end{equation}

Using renormalization group  methods physicists have computed the dynamic scaling exponent (\cite{FNS}, \cite{KPZ}, \cite{BSt})
\[
z=3/2.
\]
 Roughly, this means that one expects non-trivial behavior under the rescaling
\[
h_\eps(t,x)  =\eps^{1/2} h( \eps^{-3/2} t, \eps^{-1}x).
\] 
For the totally asymmetric exclusion process and the polynuclear growth models, which can
 be thought of as discretizations of (\ref{KPZ0}),  it is now known  rigorously \cite{FS, PS} that in a weak sense,
 \begin{equation}\label{sc}
 \Var ( h_\eps(t,x))   \simeq  t^{2/3} g_{\rm sc}(t^{-2/3} (x- {\rm v} t))
 \end{equation}
 for an explicit ${\rm v}$ and scaling function $g_{\rm sc}$ related to the Tracy-Widom distribution.
 Note that these models are in some sense exactly solvable. 

(\ref{KPZ0}) and (\ref{SBE}) are ill-posed because the quadratic non-linear term cannot possibly make sense for 
a typical realization, which, in the case of (\ref{KPZ0}) is expected to look, in $x$, locally, like a Brownian motion with variance $\nu^{-1}\sigma^2$.  Formally  applying the
Hopf-Cole transformation \begin{equation}\label{hc0}
Z(t,x)= \exp\{-\lambda\nu^{-1} h (t,x)\}
\end{equation}
to (\ref{KPZ0}) leads to the stochastic heat equation\begin{equation}\label{heat}
\partial_tZ = \nu \partial_x^2 Z  - \lambda\nu^{-1}\sigma  Z\dot W.
\end{equation}
 The advantage is that (\ref{heat}) is well-posed \cite{W}.  We do not attempt to justify
 the manipulations leading to (\ref{heat}).  We define $h$ and $u=\partial_x h$ through (\ref{hc0}).
 These {\it Hopf-Cole solutions} are expected to be the physically relevant solutions of (\ref{KPZ0}) and (\ref{SBE}).  

$Z$ can  be thought of as an asymptotic model of a directed polymer.
  There is a Feynman-Kac formula   
\begin{equation}\label{zed0}
Z(t,x)= E^\nu_x[ :\!\exp\{-\beta \int_0^t \dot W( s, b(s)) ds \}\!\!: Z(0,b(t))]
\end{equation}
where the expectation is over an independent Brownian motion $b(s)$, $s\ge 0$ starting at $x$,
of variance $\nu$, $\beta = \lambda\nu^{-1}\sigma$, and $:\!\!\exp \!\!:$ is the Wick-ordered exponential (see  \cite{NR} for details).   
The reason  we write  (\ref{zed0}) is to draw attention to the analogy with directed polymers, a typical model being
\begin{eqnarray}\label{dp}
&z(n,x)= E_{x}[ \exp\{-\beta \sum_{m=1}^n X(m, s_m) \}]&
\end{eqnarray} 
where $X(m,r)$, $m\in\{1,2,\ldots\}$, $r\in\{ \ldots,-1,0,1,\ldots\}$ are independent and identically distributed random variables, and $s_m$ is a simple random walk starting at $x$.  Assuming reasonable decay on the tails of the $X$'s, it is expected \cite{KS} that for any $\beta$,
\begin{equation}\label{dis}
\Var  (\log z(n,0)) \sim cn^{2\chi }\quad\text{with}\quad \chi =1/3.
\end{equation}
Little is known rigorously.  Bounds analogous to $\chi\in [ 3/10, 1/2]$ are obtained in 
 \cite{BTV},  \cite{LNP}, \cite{M}, \cite{P}, and  \cite{W}.
The closest results with $\chi=1/3$ are those of \cite{BDJ, J} for certain last passage percolation models, which are obtained in the $\beta\to \infty$ (zero-temperature) limit.   Note the  
contrast with dimensions $d\ge 3$ where the polymer is known to
be diffusive for  small $\beta$.

\subsection{Mathematical background}
We now survey what is known rigorously about (\ref{KPZ0}).  
In terms of well-posedness, the technology at the present time \cite{DePZ} can only
handle far smoother noise terms than the white noise.  An unusual type of Wick product
version of the problem has been introduced \cite{HOUZ}.
 But besides requiring fairly smooth noises, this does
{\em not} have the scaling expected \cite{Ch}, and is therefore believed {\em not} to be
physically relevant.

The idea in \cite{BG}, which leads to what appears to be the physically relevant solution, is to smooth out the white noise in space a little, and then
use the Hopf-Cole transformation and the tractability of (\ref{heat})
to remove the cutoff.  As this is done, one finds one has to subtract a large constant from the equation.  The resulting {\em Hopf-Cole  solution of} (\ref{KPZ0})
is given  explicitly as the logarithm of the well-defined solution of (\ref{heat}).  We now recall the details.

Let $W(t)$, $t \ge 0$, be the cylindrical Wiener process, i.e. the continuous Gaussian process taking values in  $H^{-1/2-}_{\rm loc} (\mathbb R)=\cap_{\alpha<-1/2} H^{\alpha}_{\rm loc}(\mathbb R)$  with
\begin{equation}
E[ \langle \varphi ,W(t)\rangle \langle \psi, W(s)\rangle ] =\min(t,s) \langle \varphi, \psi\rangle 
\end{equation}
for any $\varphi,\psi\in C_c^\infty(\mathbb R)$, the smooth functions with compact support in $\mathbb R$.
Here $H^{\alpha}_{\rm loc}(\mathbb R)$, $\alpha<0$,  consists of
distributions $f$ such that for any $\varphi\in C_c^\infty(\mathbb R)$,
$\varphi f$ is in the standard Sobolev space $H^{-\alpha}(\mathbb R)$, i.e. the dual of $H^{\alpha}(\mathbb R)$ under the $L^2$ pairing.
$H^{-\alpha}(\mathbb R)$ is the closure of $C_c^\infty(\mathbb R)$ under 
the norm $\int (1+ |t|^{-2\alpha}) |\hat f(t)|^2dt$ where $\hat f$ denotes 
the Fourier transform.

The distributional time derivative $\dot W(t,x)$ 
 is space-time white noise.
 These Sobolev spaces are
the natural home, as can be seen
 by restricting to a finite box and writing a Fourier series for 
 white noise,  with independent and identically distributed
 Gaussian variables  as Fourier coefficients.   Note the mild abuse of notation for the sake of clarity, as we write $\dot W(t,x)$ even though it is a distribution on $(t,x)\in [0,\infty)\times \mathbb R$ as opposed to  a classical function of $t$ and $x$. 

  Let ${\mathcal F}(t)$, $t\ge 0$, be the natural filtration, i.e. the smallest $\sigma$-field
 with respect to which  $W(s)$ are measurable for all $0\le s\le t$.  Let ${\mathcal G}\in C_c^\infty({\mathbb R})$ be an even, non-negative function with total integral $1$   and for $\kappa>0$, 
${\mathcal G}_\kappa(x) = \kappa^{-1} {\mathcal G}( \kappa^{-1} x )$.  The mollified Wiener process is given by $W^\kappa (t,x) = \langle\tau_{-x}{\mathcal G}_\kappa, W(t)\rangle $ where $\tau_{-x} f(y) = f(y+x)$ is the shift operator.  The distributional time derivative of $W^\kappa (t,x)$ is the Gaussian field with covariance
\begin{equation}C_\kappa(x-y) \delta(t-s)= E[ \dot W^\kappa(t,x) \dot W^\kappa(s,y) ]
\end{equation} 
where
\begin{eqnarray}
&C_\kappa(x) = \kappa^{-1} \int {\mathcal G}(\frac{\kappa^{-1}}{2} x -y ){\mathcal G}(-\frac{\kappa^{-1}}{2} x -y )dy, \quad C_\kappa(0)=\kappa^{-1}\|\mathcal G\|^2_2.&\nonumber
\end{eqnarray}

Assume the initial data $h(0,x)$ is a random continuous function on $\mathbb R$ such that for each $p>0$ there is an
$a=a(p)<\infty$ such that 
\begin{eqnarray}&\sup_{
x\in {\mathbb R}} e^{-a |x|} E[ e^{-p\,h(x)}]  <\infty
.&
\label{idat}
\end{eqnarray}
The mollified KPZ equation is
\begin{equation}
\partial_t h^\kappa = -\lambda[(\partial_xh^\kappa)^2 -C_\kappa(0)] + \nu\partial_x^2 h^\kappa + \sigma \dot W^\kappa.
\label{KPZ}
\end{equation}

 The following  summarizes previous results, mostly from \cite{BG}.

\begin{prop} \label{thm1} Let $h(0,x)$ be a random initial continuous function satisfying (\ref{idat}) and independent of the white noise $\dot W$. 

\noindent{\it 1.}  For each $\kappa>0$, there exists a unique continuous Markov process $h_\kappa(t)$, with
probability distribution $P_\kappa$ on  $C([0,T], C(\mathbb R))$, adapted to  $\tilde{\mathcal F}(t)= \sigma( h_0, W_s, s\le t)$, $t\ge 0$, and solving (\ref{KPZ}).  

\noindent{\it 2. }  The process $Z^\kappa(t,x)= \exp\{-\lambda\nu^{-1} h_\kappa(t,x)\}$ is the unique adapted (mild) solution of the It\^o  equation,
\begin{equation}
\partial_tZ^\kappa = \nu \partial_x^2Z^\kappa  + \lambda\nu^{-1} \sigma Z^\kappa \dot W^\kappa, \qquad Z^\kappa(0,x) = \exp\{-\lambda\nu^{-1} h(0,x)\}
\end{equation}
and has the representation \begin{equation}\nonumber
Z^\kappa(t,x)= E_x[ \exp\{-\beta\int_0^t \dot W^\kappa( s, b(s)) ds - \frac12 C_\kappa(0) t \} Z^\kappa(0,b(t))].
\end{equation}
{\it 3.} $Z^\kappa(t,x)\to Z(t,x)$  almost surely, uniformly on compact sets of $[0,\infty)\times \mathbb R$ where $Z$ is the unique adapted (mild) solution of (\ref{heat}) with $ Z(0,x) = \exp\{-\lambda\nu^{-1} h(0,x)\}$.
Furthermore,
   $Z(t,x)>0$ for all $t\ge 0$, $x\in\mathbb R$ almost surely \cite{Mu}. 
   
   \noindent
{\it 4.} The $P_\kappa$ are tight as measures on  $C([0,T], C(\mathbb R))$.  The limit
process $h(t)$ coincides with 
\begin{equation}\label{def}
h(t,x) = -\lambda^{-1}\nu \log Z(t,x).
\end{equation}

\noindent{\it 5.}    Define
  \begin{equation}
  u = \partial_x h
  \end{equation}
  in the sense of distributions.  If we start $Z$ with initial data $Z(0,x) = \exp\{B(x)\}$ where $B(x)$ is a two sided Brownian motion independent of $W$ with variance $\nu^{-1}\sigma^2$, then $u$ is stationary in both space and time.   In this sense, Gaussian white
  noise with variance $\nu^{-1}\sigma^2$ is invariant for (\ref{SBE}).  Stationarity here means stationarity of
  $\langle \tau_x\varphi , u(t)\rangle$ for smooth functions of compact support $\varphi(x)$  where
  $(\tau_x\varphi)(y)= \varphi(y-x)$.  The corresponding $h(t,x)$ and $Z(t,x)$ are not stationary in time with these initial
  data, but the increments $D_\delta h(t,x)= h(t,x+\delta)-h(t,x)$  are space and time stationary.  
  
  \noindent
{\it 6.} Let  $h$ be the Hopf-Cole solution of (\ref{KPZ0}), as in (\ref{def}).  Then \begin{equation}
h^\gamma(t,x) = \gamma^{-\alpha} h( \gamma^{-\beta} t, \gamma^{-1} x)
\end{equation} is the Hopf-Cole solution of (\ref{KPZ0}) with
new coefficients \begin{equation}\label{rescale}\lambda_\gamma  =  \gamma^{\alpha-\beta+2}\lambda,\quad
\nu_\gamma = \gamma^{-\beta+2}\nu, \quad \sigma_\gamma =
\gamma^{\frac{-\beta+1-2\alpha}{2}}\sigma.
\end{equation}
If we start (\ref{SBE}) in equilibrium, ie. $u(0)$ is a white noise
with variance $\nu^{-1}\sigma^2$, then the time reversed process
$u(T-t)$, $t\in[0,T)$ is a solution of (\ref{SBE}) with $\lambda$ replaced
by $-\lambda$, and the spatially reversed process $h(t,-x)\stackrel{\rm dist}{=}h(t,x)$.
\end{prop}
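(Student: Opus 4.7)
\emph{Scaling.} I would verify the scaling at the level of $Z$, where equation~(\ref{heat}) is a well-defined It\^o SPDE and uniqueness (Prop.~\ref{thm1}.3) is available. Set $Z^\gamma(t,x) := Z(\gamma^{-\beta}t, \gamma^{-1}x)$. The chain rule yields $\partial_t Z^\gamma = \gamma^{-\beta}(\partial_t Z)$ and $\partial_x^2 Z^\gamma = \gamma^{-2}(\partial_x^2 Z)$, evaluated at the rescaled point. Space-time white noise obeys $\dot W(\gamma^{-\beta}t,\gamma^{-1}x) \stackrel{d}{=} \gamma^{(\beta+1)/2} \dot W^\gamma(t,x)$ for some white noise $\dot W^\gamma$, because the covariance $\delta(t-s)\delta(x-y)$ acquires the factor $\gamma^{\beta+1}$ under the rescaling. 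Inserting these identities into~(\ref{heat}) and multiplying by $\gamma^{-\beta}$ shows that $Z^\gamma$ solves the SHE with coefficients $\nu_\gamma = \gamma^{2-\beta}\nu$ and $\lambda_\gamma\nu_\gamma^{-1}\sigma_\gamma = \gamma^{(1-\beta)/2}\lambda\nu^{-1}\sigma$. Combined with the Hopf-Cole relation $h^\gamma = -\lambda_\gamma^{-1}\nu_\gamma \log Z^\gamma$ and the prescription $h^\gamma(t,x)=\gamma^{-\alpha}h(\gamma^{-\beta}t,\gamma^{-1}x)$, which forces $\lambda_\gamma/\nu_\gamma = \gamma^\alpha\lambda/\nu$, these two constraints determine $\lambda_\gamma,\nu_\gamma,\sigma_\gamma$ as in the statement.

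\emph{Spatial reversal in equilibrium.} The Laplacian $\partial_x^2$ and the pointwise multiplication in~(\ref{heat}) are invariant under $x\mapsto -x$, the white noise is invariant in distribution under this reflection, and the two-sided Brownian motion $B$ satisfies $B(-\cdot)\stackrel{d}{=} B(\cdot)$ (an additive constant in $B$ only multiplies $Z(0,\cdot)$ by a constant, producing an additive constant in $h$ that is irrelevant to the distributional statement). Therefore $\tilde Z(t,x) := Z(t,-x)$ is an adapted mild solution of~(\ref{heat}) for the reflected white noise and reflected initial data; the uniqueness in Prop.~\ref{thm1}.3 yields $Z(t,-\cdot) \stackrel{d}{=} Z(t,\cdot)$, and the conclusion for $h$ follows from~(\ref{def}).

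\emph{Time reversal in equilibrium.} Here I would exploit the WASEP approximation that drives the rest of the paper. WASEP with jump rates $(p,q)$ has product-Bernoulli stationary measures, and a standard detailed-balance computation shows that under such a measure the time-reversed WASEP equals in law WASEP with swapped rates $(q,p)$. Under the weakly asymmetric KPZ scaling this swap corresponds exactly to $\lambda\mapsto -\lambda$ in~(\ref{SBE}). Granting the convergence of both the forward and reversed WASEP height functions to Hopf-Cole solutions (with parameters $\lambda$ and $-\lambda$ respectively), the microscopic reversal identity passes to the limit and delivers the claim. The main obstacle lies in the joint passage to the limit: tightness and identification must be carried out simultaneously for the forward and reversed processes, since time reversal interacts non-trivially with the $\tilde{\mathcal F}(t)$-adaptedness used to characterize the Hopf-Cole solution in Prop.~\ref{thm1}.2--3. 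This requires a joint martingale-problem identification on top of the tightness supplied by Prop.~\ref{thm1}.4.
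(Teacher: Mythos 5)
Your proposal correctly focuses on part \emph{6}, which is the only part of the proposition the paper does not outsource to Bertini--Giacomin: the paper explicitly says items \emph{1}--\emph{5} are from \cite{BG} and that \emph{6} ``follows readily from their methods'' without giving details. Your reconstruction is essentially the intended one and is correct. The scaling exponents come out right: from $\partial_t Z^\gamma = \nu\gamma^{2-\beta}\partial_x^2 Z^\gamma - \gamma^{(1-\beta)/2}\lambda\nu^{-1}\sigma Z^\gamma \dot W'$ you read off $\nu_\gamma$ and $\lambda_\gamma\nu_\gamma^{-1}\sigma_\gamma$, and combining with the Hopf--Cole constraint $\lambda_\gamma/\nu_\gamma=\gamma^\alpha\lambda/\nu$ recovers \eqref{rescale}. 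The spatial-reversal argument via reflection invariance of the heat semigroup, of white noise, and of two-sided Brownian motion, plus uniqueness of the mild solution, is also correct.

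On the time reversal: the WASEP route is the right one, but the obstacle you flag (``joint martingale-problem identification for forward and reversed processes'') is not actually present. Once one knows $u_\eps\Rightarrow u$ in $D([0,T];\cdot)$ with a limit that is continuous in time, the path-level time-reversal map $x(\cdot)\mapsto x(T-\cdot)$ is continuous at continuous paths, so $u_\eps(T-\cdot)\Rightarrow u(T-\cdot)$ comes for free from the single convergence already in hand. The only new input is the microscopic reversibility you cite (ASEP under product Bernoulli reversed in time is ASEP with $p,q$ swapped), which after the weakly asymmetric rescaling is precisely $\lambda\mapsto-\lambda$; \cite{BG}'s convergence theorem applies verbatim to the swapped-rate process. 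So no separate identification argument is needed, only the observation that the discrete time-reversal identity passes through a single weak limit.
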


Note that \cite{BG} only consider the case $\lambda=\nu=1/2$ and $\sigma=1$, but their proofs work in general.  {\it 6} is not stated there, but it follows readily from their methods. 

The {\em Hopf-Cole solution}  (\ref{def}) {\em of KPZ} (\ref{KPZ0}) is obtained as a limit of solutions of (\ref{KPZ}), i.e. after subtraction
of a divergent term $C_\kappa(0)$.  An important open problem is to show that 
a corresponding version of (\ref{KPZ0}) with an appropriate Wick
ordered nonlinearity is well-posed.  We do not address this issue here.  Since (\ref{def}) is expected to be the relevant solution, we simply study
it directly.

\subsection{Statement of results.}  We can now state our main results about $h$.

\begin{theorem}  Let  $h(t,x)$ be the Hopf-Cole solution of (\ref{KPZ0}) as in (\ref{def}) with $Z(t,x)$ the solution of (\ref{heat}), with  initial data $Z(0,x) = \exp\{B(x)\}$ where $B(x)$ is a two sided Brownian motion independent of $W$ with variance $\nu^{-1}\sigma^2$.  Let  $\Var (h(t,x))$ denote the variance of $h(t,x)$.  $\Var (h(t,x))$ is a symmetric 
function of $x$, non-decreasing in $|x|$, and
\begin{equation}\label{pospos}
\Var (h(t,x)) - |x|\ge 0.
\end{equation}
Furthermore, there exist $ c_0=c_0(\sigma,\nu,\lambda)<\infty$, and $C_1=C_1(\sigma,\nu,\lambda)<\infty$,
$C_2=C_2(m,\sigma,\nu,\lambda)<\infty$,
%
%
such that 
for $
t\ge c_0
$
we have \begin{equation}\label{bdsonh}
C_1 t^{2/3} \le 
 \Var (h(t,0))  \le C_2 t^{2/3}
 \end{equation}
 and, for $1<m<3$, 
  \begin{equation}\label{bdsonh1}
C_1 t^{2m/3} \le
\int |x|^{m-2}  \left[ \Var (h(t,x)) - |x| \right] dx  \le C_2 t^{2m/3}.
 \end{equation}\label{theoremh}
\end{theorem}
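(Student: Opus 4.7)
The plan is to prove microscopic analogues of the bounds \eqref{bdsonh}--\eqref{bdsonh1} for weakly asymmetric simple exclusion processes (WASEP) in stationarity, and transfer them to $h$ via the Bertini--Giacomin approximation that underlies Proposition~\ref{thm1}. For each small $\epsilon>0$ one considers a WASEP of density $\rho=1/2$ with asymmetry of order $\sqrt{\epsilon}$, started from the product Bernoulli measure, and lets $H^\epsilon(t,x)$ denote the associated microscopic height function. Under the Bertini--Giacomin scaling the recentered process $H^\epsilon(\epsilon^{-2}t,\epsilon^{-1}x)-v_\epsilon\epsilon^{-2}t$ converges to a constant multiple of $h(t,x)$, so the theorem will follow from variance bounds of matching scaling form on $H^\epsilon$, uniformly in $\epsilon$.

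Symmetry of $\Var(h(t,x))$ in $x$ is a direct consequence of the reflection property in part~$6$ of Proposition~\ref{thm1} combined with the reflection symmetry of the stationary initial data. For nonnegativity \eqref{pospos} and monotonicity in $|x|$ I would use a Bal\'azs--Sepp\"al\"ainen second-class-particle representation: for stationary ASEP-type processes the quantity $\Var(H^\epsilon(t,x))-\rho(1-\rho)|x|$ can be rewritten as an expression of the form $\Ev|X^\epsilon_t - x| - |x|$, where $X^\epsilon_t$ is the position at time $t$ of a second-class particle started at the origin. Nonnegativity is then immediate, monotonicity in $|x|$ follows from convexity of $x\mapsto\Ev|X^\epsilon_t - x|$, and multiplying by $|x|^{m-2}$ and integrating recasts the weighted quantity in \eqref{bdsonh1} as essentially the $(m-1)$-st absolute moment of $X^\epsilon_t-\Ev X^\epsilon_t$.

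The core of the argument is then to establish matching upper and lower bounds of order $t^{2m/3}$ on $\Ev|X^\epsilon_t - v_\epsilon t|^{m-1}$, uniformly in $\epsilon$. I would obtain these by coupling WASEPs started from Bernoulli measures of slightly different densities via basic and second-class-particle couplings: the upper bound bootstraps from a weak diffusive a priori estimate, using concavity of the flux and an exchange of moments between current and second-class-particle displacement, in the spirit of Bal\'azs--Sepp\"al\"ainen; the lower bound uses that if $X^\epsilon_t$ were concentrated on a scale below $t^{2/3}$, then $H^\epsilon$ would have to be smoother than its stationary Bernoulli marginal allows.

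The main obstacle I anticipate is tracking the dependence on $\epsilon$ through every coupling argument: existing bounds for ASEP are carried out at fixed asymmetry, whereas here the constants must remain bounded under the simultaneous weak-asymmetry and hydrodynamic rescaling in order for the microscopic estimates to survive the limit $\epsilon\to 0$. Once the uniform second-class-particle moment bounds are in hand, the Bertini--Giacomin convergence $H^\epsilon\to h$, together with the uniform integrability supplied by those bounds, yields convergence of variances and hence \eqref{bdsonh}--\eqref{bdsonh1}.
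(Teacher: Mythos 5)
Your overall strategy is the same as the paper's: approximate by stationary weakly asymmetric exclusion at density $1/2$, express the variance deficit $\Var(\zeta_\eps(t,x))-|x|$ through current covariances and the second-class particle, prove $t^{2m/3}$ moment bounds for the second-class particle uniformly in $\eps$ by basic coupling of processes at nearby densities (upper bound by label-drift plus current variance exchange, lower bound by a change-of-measure argument showing that too tight a concentration contradicts the stationary marginal), and then transfer to $h$ via the Bertini--Giacomin weak convergence. The symmetry, nonnegativity, and monotonicity observations are also as in the paper (a small slip: the identity $\int|x|^{m-2}[\Var(h(t,x))-|x|]\,dx \asymp \Ev|\sc(t)|^m$ involves the $m$-th, not $(m-1)$-st, absolute moment of the second-class particle; compare with the paper's \eqref{v-|x|}).

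The genuine gap is in your final sentence. You claim that the Bertini--Giacomin convergence together with ``uniform integrability supplied by those moment bounds'' yields convergence of $\Var(h_\eps(t,x))$ to $\Var(h(t,x))$, and hence the two-sided bounds. But the second-class-particle moment bounds control $\Var(h_\eps(t,0))$ itself (it equals $\eps\,\Ev_\eps|\sc(\eps^{-2}t)|$ up to constants), not higher moments of $h_\eps(t,0)$; weak convergence of $h_\eps$ together with Fatou gives only $\Var(h(t,0))\le\liminf_\eps\Var(h_\eps(t,0))$, which is the upper bound direction. For the lower bound you need either uniform integrability of $h_\eps(t,0)^2$ — which would require uniform control of some moment of $h_\eps(t,0)$ strictly above the second, and as the paper explicitly notes in the proof of \eqref{td}, no such control is available — or an independent argument. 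The paper instead first constructs the correlation measure $S(t,dx)$ as the limit of the $S_\eps$, proves the moment bounds \eqref{bdsonu} for $S(t)$ directly (here weak convergence plus tightness genuinely suffices because the lower bound is obtained by testing against a compactly supported $f\ge|x|^m\ind_{|x|\le At^{2/3}}$), and then establishes the exact identity $\Var(h(t,x))=|x|+2\int_{|x|}^\infty(z-|x|)\,S(t,dz)$. That identity does not follow from the particle approximation; it requires the separate PDE input $\Var(h(t,x))-|x|\to 0$ as $|x|\to\infty$, proved in the Appendix by a direct coupling analysis of solutions of the stochastic heat equation with independent noise far from the origin. Without this supplementary argument the lower bounds in \eqref{bdsonh} and \eqref{bdsonh1} do not transfer to the limit.
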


\begin{remark}
 The dependence of the constants $c_0$ and $C$ on  $m,\sigma,\nu,\lambda$ is as follows:  We can take
 \begin{equation}\nonumber
 c_0= \tilde c_0\sigma^{-4}\nu^5\lambda^{-4},~~ C_1=\tilde C_1 \sigma^{\frac{2m}{3} + 2} \nu^{-\frac{m}{3} -1}\lambda^{\frac{2m}{3}}, ~~ C_2=  C(m)\sigma^{\frac{2m}{3} + 2} \nu^{-\frac{m}{3} -1}\lambda^{\frac{2m}{3}} 
 \end{equation} for some $\tilde c_0, \tilde C_1\in (0,\infty)$ and $C(m)= \frac{C}{m(3-m)(m-1)}$, $1<m<3$ and $C(1)=C/4$.  Here $m=1$ refers to (\ref{bdsonh}).  The lower bound holds for all $m\ge 1$.
\end{remark}
 
 \begin{remark}
Theorem \ref{theoremh} tells us that  the
 scaling exponent for the quantities studied
 here follows the physical prediction, as can be seen by integrating
 (\ref{sc}).  This 
 provides considerable support
for the notion that this process $h$ 
 is the sought after solution of (\ref{KPZ0}), even though 
 is not  known presently how to show directly that $h(t,x)=-\lambda^{-1}\nu\log Z(t,x)$ solves (\ref{KPZ0}), or indeed, what it means to solve (\ref{KPZ0}).  \end{remark}

\begin{remark} Sometimes we want to indicate the dependence of the solution on the parameters by writing $h(t,x;\lambda,\nu,\sqrt{\nu})$.  It is interesting to take $\lambda =1$, $\sigma^2=\nu$ because  $h(t,x;1,\nu,\sqrt{\nu})=\nu h(\nu^{-3}t, \nu^{-2}x; 1,1,1)$ corresponds to the 
solution of $\partial_t h = -  (\partial_xh)^2 + \nu \partial_x^2 h + \sqrt{\nu }\dot W
$ and note that
the result implies that there is a $\nu_0>0$ and fixed $0< c_1\le c_2 <\infty $ such that 
$ c_1t^{2/3} \le \Var (h(t,0;1,\nu,\sqrt{\nu}))\le c_2t^{2/3}$ for $t\ge 1$ for all $\nu\le \nu_0$.
\end{remark}

Now we turn to results about the correlations of $u(t,x)$.   Throughout we will assume that it is in equilibrium with initial data white noise with
variance $\nu^{-1}\sigma^2$. 
 By definition 
$
\langle\varphi, u(t)\rangle=-\int \varphi'(x)h(t,x)dx
$ for $\varphi\in C_c^\infty(\mathbb R)$. 
 For each fixed $t>0$, define a bilinear functional on $C_c^\infty(\mathbb R)\times C_c^\infty(\mathbb R)$
\[
B_t(\varphi,\psi) = E[\langle\varphi, u(t)\rangle\langle\psi, u(0)\rangle].
\] 
The following proposition provides us with our definition of the space-time correlation measure of the stochastic
Burgers equation.  

\begin{prop} \label{pr} For each $t>0$ there is a unique probability measure $S(t,dx)$ on ${\mathbb R}$ such 
that for $\varphi,\psi \in C_c^\infty(\mathbb R)$
\begin{equation}\label{preq}
B_t(\varphi,\psi) = \int_\R  \biggl[\frac{1}{2} \int_\R \varphi\Bigl( \frac{y+x}{2}\Bigr) \psi\Bigl( \frac{y-x}{2}\Bigr)
\,dy \biggr] S(t,dx).
\end{equation}
$S(t,\cdot)$ is symmetric: $S(t,A)= S(t,-A)$ for Borel sets $A$ where $-A=\{-x: x\in A\}$. 
The connection with the process $h$ is that 
\be  \Var(h(t,x)) ={x}  +2 \int_{{x}}^\infty  (z-{x})\,S(t,dz)  
\label{hSa}\ee
for $x\in\R$ and $t>0$.
\end{prop}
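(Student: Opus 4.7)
The plan is to construct $S(t,\cdot)$ from $B_t$ by exploiting the spatial stationarity of $u$, upgrade it to a probability measure via WASEP approximation, and derive (\ref{hSa}) from (\ref{preq}) combined with integration-by-parts against the initial Brownian data. The argument falls into four steps.

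\textbf{Step 1 (translation invariance).} First I verify that $B_t(\varphi,\psi)$ is a well-defined, jointly continuous bilinear form on $C_c^\infty(\mathbb{R})\times C_c^\infty(\mathbb{R})$. This uses $\langle\varphi, u(t)\rangle = -\int \varphi'(x) h(t,x)\,dx \in L^2$, which follows from finite second moments of $Z(t,x)$ (hence of $h(t,x)$) and the stationary Brownian-increment structure of $h(t,\cdot)$ in space, both furnished by Proposition \ref{thm1}. Joint spatial stationarity of $(u(0,\cdot),u(t,\cdot))$ from item (5) then yields $B_t(\tau_z\varphi,\tau_z\psi) = B_t(\varphi,\psi)$ for all $z\in\mathbb{R}$.

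\textbf{Step 2 (existence, uniqueness, positivity).} By the Schwartz kernel theorem, $B_t$ is represented by a kernel $K_t \in \mathcal{D}'(\mathbb{R}^2)$; diagonal translation invariance forces $K_t$ to descend to a distribution $T(t,\cdot)$ on $\mathbb{R}$, giving
\[
B_t(\varphi,\psi) = \int T(t,dz) \int \varphi(u)\psi(u-z)\,du.
\]
The change of variables $u = (y+z)/2$, $v = (y-z)/2$ in the inner integral recovers exactly (\ref{preq}) with $S(t,\cdot) = T(t,\cdot)$. Uniqueness is immediate because the span of $\{\varphi\ast\check\psi : \varphi,\psi\in C_c^\infty\}$ is dense in $C_c^\infty(\mathbb{R})$. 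The main technical obstacle is upgrading $T$ from a distribution to a positive measure of total mass one. The plan is to approximate through the WASEP coupling that is the central technique of the paper: for each approximation parameter, the analogous discrete object $S^{(n)}(t,\cdot)$ is, by the classical second-class particle identity (Pr\"ahofer--Spohn, Ferrari--Fontes), the distribution of a coupled discrepancy and hence a probability measure on the lattice. Tightness of $\{S^{(n)}(t,\cdot)\}$, obtained from uniform second-moment bounds on WASEP heights proved later in the paper, together with the approximation result of Proposition \ref{thm1}(3), allows passage to the limit to conclude that $S(t,\cdot)$ is a probability measure.

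\textbf{Step 3 (symmetry).} By Proposition \ref{thm1}(6), $h(t,-x)\stackrel{d}{=} h(t,x)$, which on differentiation yields $u(t,-x)\stackrel{d}{=}-u(t,x)$, and likewise for $u(0)$. Writing $\check\varphi(y):=\varphi(-y)$, this gives $B_t(\check\varphi,\check\psi) = B_t(\varphi,\psi)$. Substituting $\check\varphi,\check\psi$ into (\ref{preq}) and changing variable $z\mapsto -z$ in the $S$-integral produces the same representation but with $S(t,\cdot)$ replaced by its reflection; the uniqueness from Step 2 forces $S(t,-A) = S(t,A)$.

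\textbf{Step 4 (variance identity).} Applying (\ref{preq}) to smooth approximations of $\varphi = \psi = \mathbf{1}_{[0,x]}$ (for $x>0$) and evaluating the inner integral yields
\[
\Ev\bigl[(h(t,x) - h(t,0))(h(0,x) - h(0,0))\bigr] = \int_{-x}^{x} (x - |z|)\, S(t,dz).
\]
Since $h(0,0) = B(0) = 0$, the left-hand side equals $\Cov(h(t,x) - h(t,0), B(x))$. To identify this cross-correlation with $\Var(h(t,x)) - \Var(h(t,0))$ one applies Gaussian integration by parts against the initial white noise $u(0,\cdot)$: because $B(x) = \int_0^x u(0,y)\,dy$ and $h(t,x)$ is a nonlinear functional of $u(0)$ and the independent space-time white noise $W$ (through the Feynman--Kac representation in Proposition \ref{thm1}(2)), the Malliavin/Cameron--Martin formula converts the cross-correlation into a sensitivity integral that, after invoking the SHE structure, equals $\Var(h(t,x)) - \Var(h(t,0))$. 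Combined with the equilibrium identity $\Var(h(t,x) - h(t,0)) = x$ (from stationarity of spatial increments of $h$) and the symmetry of $S$, short algebraic rearrangement yields (\ref{hSa}); the value $\Var(h(t,0)) = \int_\mathbb{R} |z|\,S(t,dz)$ is read off at $x = 0$, while the linear term $x$ arises from the total mass of $S$ being one.

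The main obstacle throughout is Step 2, the upgrade from distribution to probability measure via WASEP approximation; it requires the discrete second-class particle identity and the tightness bounds developed in the body of the paper. Step 4's identification via Gaussian integration by parts is the second most delicate point, precisely because $h(t,x)$ depends nonlinearly on the initial Brownian data.
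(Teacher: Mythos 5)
Your Steps 1--3 are broadly in line with the paper (construction of $S(t,\cdot)$ as a weak limit of the rescaled second-class particle laws from the WASEP, $L^2$ control via the stationary Brownian spatial increments and the second-moment bound, symmetry from distributional reflection invariance). Note, though, that the paper proves \eqref{preq} by a direct computation on the discrete approximation $\langle\varphi',h_\eps(t)\rangle\langle\psi',h_\eps(0)\rangle$ and $L^4$-uniform integrability, not by invoking the Schwartz kernel theorem; your kernel-theorem framing is fine in spirit but would still need to be matched against that same limit, so it is not a shortcut.

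Step 4 has two genuine gaps. First, the claimed identification $\Cov\bigl(h(t,x)-h(t,0),\,B(x)\bigr)=\Var(h(t,x))-\Var(h(t,0))$ is not a consequence of ``Gaussian integration by parts / Malliavin calculus.'' Gaussian IBP against the initial noise gives $\int_0^x E[D_y^B(h(t,x)-h(t,0))]\,dy$, which bears no obvious relation to a variance difference; variances would require a Clark--Ocone type expansion in \emph{all} the noise, not just $B$. The identity is in fact true, but only because of the nontrivial reflection symmetry $\Cov\bigl(M(t,x),\,N(t,0)+N(t,x)\bigr)=0$ (with $M(t,x)=h(t,x)-h(t,0)$, $N(t,x)=h(t,x)-h(0,x)$), which the paper establishes in the Appendix via $h(t,-x)\overset{d}{=}h(t,x)$ and translation invariance; that argument is what you need, not a sensitivity/IBP computation. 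Second, even granting the identity, testing \eqref{preq} on $\mathbf 1_{[0,x]}$ gives $\Var(h(t,x))=\Var(h(t,0))+\int_{-x}^{x}(x-|z|)\,S(t,dz)$, which is not yet \eqref{hSa}: you still need $\Var(h(t,0))=\int_\R|z|\,S(t,dz)$. ``Reading it off at $x=0$'' is a tautology (both sides reduce to $0=0$). The paper determines this constant by proving $\Var(h(t,x))-|x|\to 0$ as $|x|\to\infty$, which in turn rests on the coupled stochastic-heat-equation decorrelation estimate (Lemma~\ref{lemmaone}); your proposal omits this step entirely, so the normalization is unproved. The paper's own route is: establish $\tfrac12\partial_x^2\Var(h(t,x))=S(t,dx)$ as distributions by a discrete-Laplacian/tent-function computation using spatial stationarity of increments, integrate this with Lemma~\ref{v''lemma}, and then pin down the additive constant with the Appendix tail estimate.
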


The route to constructing $S(t)$ and proving \eqref{hSa} is somewhat
circuitous. The measure $S(t)$ is constructed as a weak limit
from the rescaled correlations of a particle process.
Then we show that, in the sense of distributions,
\begin{equation}\label{td}
\tfrac12\partial_x^2 \Var (h(t,x)) = S(t,dx).
\end{equation}
Finally, after studying solutions of the stochastic heat equation,
we can deduce \eqref{hSa}.  Here are the bounds on $S(t)$.

\begin{theorem}\label{theoremu} Let  $u(t)=\partial_x h(t)$ be the distributional derivative of the Hopf-Cole solution  {\rm (\ref{def})} of {\rm (\ref{KPZ0})}, in equilibrium with initial data white noise with  variance $\nu^{-1}\sigma^2$ and let $S(t,dx)$ be the space-time correlation measure defined through {\rm (\ref{preq})}.  With the same constants as in Theorem \ref{theoremh}, 
for $
t\ge c_0 
$
we have 
 \begin{equation} \label{bdsonu}
C_1 t^{2m/3} \le \int |x|^{m} S(t, dx)  \le C_2 t^{2m/3}.
\end{equation} The upper bound holds for $1\le m<3$. The lower bound holds for all $m\ge 1$.
 In particular, the {\rm diffusivity} 
\begin{equation}
D(t) = \frac{1}{t} \int x^2 S(t,dx) 
\end{equation}
satisfies
\begin{equation} 
 C_1 t^{1/3} \le D(t) \le C_2 t^{1/3}.
\end{equation}
\end{theorem}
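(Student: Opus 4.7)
The plan is to derive Theorem \ref{theoremu} directly from Theorem \ref{theoremh} and Proposition \ref{pr}. The identity (\ref{hSa}) combined with the symmetries of $\Var(h(t,\cdot))$ and $S(t,\cdot)$ furnishes
\[
\Var(h(t,x)) - |x| \;=\; 2\int_{|x|}^\infty (z-|x|)\, S(t,dz), \qquad x\in\R,
\]
with nonnegative integrand by (\ref{pospos}). This identity is the only analytic input required beyond Theorem \ref{theoremh}.

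First I would handle the endpoint $m=1$ by evaluating the identity at $x=0$, which yields $\Var(h(t,0)) = \int_\R |z|\, S(t,dz)$; the bounds (\ref{bdsonh}) then immediately give $C_1 t^{2/3} \le \int_\R |z|\, S(t,dz) \le C_2 t^{2/3}$. For $1<m<3$, I would multiply the identity by $|x|^{m-2}$ and integrate over $\R$. By nonnegativity, Fubini applies freely, and the inner integration $\int_0^z r^{m-2}(z-r)\,dr = z^m/(m(m-1))$ produces
\[
\int_\R |x|^{m-2}\bigl[\Var(h(t,x))-|x|\bigr]\,dx \;=\; \frac{2}{m(m-1)}\int_\R |z|^m\, S(t,dz).
\]
Substituting (\ref{bdsonh1}) transfers the bounds to (\ref{bdsonu}) on this range, with the $m(m-1)$ factor accommodated by the $m$-dependence of the constants given in the remark following Theorem \ref{theoremh}.

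For the lower bound at $m\ge 3$, I would use that $S(t,\cdot)$ is a probability measure (Proposition \ref{pr}) and apply Jensen's inequality to the convex map $r\mapsto r^m$ on $[0,\infty)$:
\[
\int_\R |z|^m\, S(t,dz) \;\ge\; \Bigl(\int_\R |z|\, S(t,dz)\Bigr)^m \;\ge\; C_1^m\, t^{2m/3}.
\]
The diffusivity bounds on $D(t)$ are then just the specialization $m=2$ divided by $t$.

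Because the heavy estimation has already been carried out for Theorem \ref{theoremh}, no substantive obstacle arises. The only bookkeeping point is to verify that the extra $m$-dependent prefactors produced by the Fubini step and by Jensen are compatible with the constant dependencies recorded in the remark after Theorem \ref{theoremh}; the form $C(m) \sim 1/(m(m-1)(3-m))$ there is exactly engineered to absorb them. If one preferred, one could alternatively start from the distributional identity (\ref{td}) and integrate by parts against $|x|^m$, which leads to the same arithmetic.
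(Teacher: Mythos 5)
There is a genuine circularity in your argument, and it stems from the logical order in which the two theorems are established. In the paper, Theorem \ref{theoremu} is \emph{not} a consequence of Theorem \ref{theoremh}; the dependence runs the other way. The identity \eqref{hSa} together with the bounds \eqref{bdsonu} is precisely what supplies \eqref{pospos}, the lower bound in \eqref{bdsonh}, and the lower bound in \eqref{bdsonh1} --- this is stated explicitly just after Proposition \ref{varh-prop-9}: ``This proposition implies the remaining parts of Theorem \ref{theoremh} \ldots Then we can apply the bounds from \eqref{bdsonu}.'' So when you write ``the bounds \eqref{bdsonh} then immediately give $C_1 t^{2/3}\le \int |z|\,S(t,dz)$'' you are invoking exactly the inequality you are trying to prove (the $m=1$ case of the lower bound in \eqref{bdsonu} and the lower bound in \eqref{bdsonh} are the same statement, up to the identity $\Var(h(t,0))=\int|z|\,S(t,dz)$). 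The same applies to your use of the lower bound in \eqref{bdsonh1} for $1<m<3$. Your Fubini computation, your $m=1$ evaluation at $x=0$, and your Jensen step for $m\ge 3$ are all correct as algebraic manipulations, but as a derivation of \eqref{bdsonu} the whole thing is a tautology: the continuum inequalities you assume were themselves deduced from \eqref{bdsonu}.

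The substantive content you are missing is how the lower bound in \eqref{bdsonu} is actually obtained: it comes directly from the microscopic estimate \eqref{Qmoma} (Theorem \ref{Qmomthma}) and the weak convergence $S_\eps(t,x)\,dx\Rightarrow S(t,dx)$, and the nontrivial step is ruling out mass escaping to infinity in the $\eps\searrow 0$ limit. Weak convergence and Portmanteau/Fatou give you the \emph{upper} bound on $\int|x|^m S(t,dx)$ essentially for free, but the lower bound is not automatic. The paper handles this by testing against a compactly supported $f\ge |x|^m\,\mathbf 1_{\{|x|\le At^{2/3}\}}$, passing to the limit, and controlling the tail $\int_{|x|\ge At^{2/3}}|x|^m S_\eps(t,x)\,dx$ via Chebyshev using a higher moment $m+\delta<3$, again from \eqref{Qmoma}. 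That tightness argument, not the identity \eqref{hSa}, is the heart of the lower bound in Theorem \ref{theoremu}. (Your Jensen reduction to $m=1$ is a sound idea and in fact mirrors what the paper does at the microscopic level in the proof of Theorem \ref{Qmomthm}, but it needs to be applied there rather than to the already-limiting measure.)
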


We turn to proofs. The first issue is to develop the connection with
the exclusion process.

\section{Weakly asymmetric simple exclusion}
\label{weakly}
\setcounter{equation}{0}
We consider nearest neighbour (i.e.~simple) exclusion on $\mathbb Z$ (ASEP) with particles attempting jumps to the right at rate $p=1/2$ and to the 
left at rate $q=1/2 +\eps^{1/2}$ with $\eps\in(0,1/4)$.
This is a system of continuous time random walks jumping to the right at rate $p$ and to the 
left at rate $q$, with the rule that jumps to already occupied sites are not realized.
Hence the occupation variable can be taken to be $\eta(t,x)=1$ or $0$ depending 
on whether or not there is  a particle at $x\in\mathbb Z$ at time $t$.  One of the most important properties of this
system is that it preserves Bernoulli product measures with any density $\rho\in [0,1]$. Here we take $\rho =1/2$,
i.e. we take as initial 
configuration  independent Bernoulli $\{\eta(0,x)\}$, $x\in \mathbb Z$,  with density $1/2$.

  Let $\hat\eta=  2\eta-1$ and define the height function
\begin{equation}
\zeta_\eps(t,x)=\left\{\begin{array}{ll}
\sum_{0<y\le x} \hat\eta(t,y) - 2N(t,0) , & x>0, \\[6pt]
-2N(t,0), & x=0, \\[4pt]
-\sum_{x<y\le 0} \hat\eta(t,y) - 2N(t,0) , & x<0,\end{array}\right.
\label{psheight}\end{equation}
where $N(t,0)$ is the current across the bond   $(0,1)$
up to time $t$, i.e.\ the number of particles that jump from $0$ to $1$
minus the number of particles that  jump from $1$ to $0$ in the time
interval $[0,t]$.  
  For $x\in  \mathbb R$ 
and $t\ge  0$ let $h_\eps(t,x)$ denote the rescaled height function;
\begin{equation}
h_\eps(t,x) = \eps^{1/2}\bigl( \zeta_\eps(\eps^{-2}t,[\eps^{-1}x]) - v_\eps t\bigr) 
\label{scaledhgt}\end{equation}
where
$
 v_\eps= \frac12 \eps^{-3/2} - \frac{1}{4!} \eps^{-1/2}
$
 and the closest integer $[x]$ is given by \begin{equation}[x] = \lfloor x+\tfrac12\, \rfloor.\label{closestinteger}\end{equation}

 
 We think of $h_\eps$ as an element of the space $D([0,\infty); D_u(\mathbb R))$ where $D$ refers to right continuous paths with left limits.  $D_u(\R)$ indicates
 that in space
  these functions are  equipped with  the topology of uniform convergence on compact sets. 
Because the discontinuities of $h_\eps(t,\cdot)$ are restricted to
 $\eps(1/2+\Z)$,
it is measurable as a $D_u(\R)$-valued random function (see Sec. 18 of 
\cite{Bill}.)   Since the jumps of $h_\eps(t,\cdot)$ are uniformly small, 
local uniform convergence works for us just as well the standard 
  Shorohod topology.  
  The  probability distribution of the process $h_\eps$ on $D([0,\infty); D_u(\mathbb R))$ will be 
  denoted ${\mathcal P}_\eps$.
  
\begin{prop} {\rm\cite{BG}}   As $\eps\searrow 0$, the distributions 
${\mathcal P}_\eps $ converge weakly to  ${\mathcal P}$,  
the distribution of the Hopf-Cole solution $h$  {\rm (\ref{def})} of 
{\rm (\ref{KPZ0})} with $\lambda=1/2$, $\nu=1/2$,  and 
$\sigma=1$.
 \end{prop}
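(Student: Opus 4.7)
The plan is to follow the Gärtner/Bertini--Giacomin strategy of working with the microscopic Cole--Hopf transform rather than with the height function directly. I would define
$$Z_\eps(t,x) = \exp\bigl(-\eps^{1/2}\zeta_\eps(\eps^{-2}t,[\eps^{-1}x]) + \eps^{1/2} v_\eps t\bigr) = \exp(-h_\eps(t,x)),$$
so that convergence of $h_\eps$ to $h=-\log Z$ is equivalent (via continuity of $\log$ on $(0,\infty)$ and Mueller's strict positivity of $Z$) to convergence of $Z_\eps$ to $Z$ on compacts. The advantage is that under the weakly asymmetric scaling $q-p=\eps^{1/2}$, a direct generator computation (Gärtner's identity) shows that, as a function of $t$, the lattice field $Z_\eps$ satisfies a \emph{closed} semidiscrete SDE
$$dZ_\eps(t,x) = (\Delta_\eps Z_\eps)(t,x)\,dt + dM_\eps(t,x),$$
where $\Delta_\eps$ is a discrete Laplacian converging under the diffusive rescaling to $\tfrac12\partial_x^2$ and $M_\eps(\cdot,x)$ is a martingale adapted to the exclusion filtration. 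The special choice of $v_\eps$ (including the subleading $-\tfrac{1}{4!}\eps^{-1/2}$ correction) is exactly what is needed to kill the $O(\eps^{-1/2})$ drift that would otherwise appear.

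Next I would compute the predictable quadratic covariation of the driving martingales. Because of the exclusion constraint $\eta(x)^2=\eta(x)$, the quadratic produced by applying It\^o to the exponential collapses to an expression linear in $\eta$; at density $\rho=1/2$ (the stationary value that is preserved along the dynamics), the site-by-site coefficient reduces to a constant plus a fluctuating piece of vanishing order, so that
$$\tfrac{d}{dt}\langle M_\eps(\cdot,x),M_\eps(\cdot,y)\rangle = \delta_{x,y}\,Z_\eps(t,x)^2\cdot\bigl(\tfrac12+o(1)\bigr),$$
in the scaling variables. This is the key structural input: after rescaling, $\eps^{-1}\delta_{x,y}$ converges to $\delta(x-y)$, and the prefactor $Z_\eps^2$ supplies exactly the multiplicative noise of the SHE \eqref{heat} with $\lambda=\nu=1/2$, $\sigma=1$.

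With the martingale problem in hand, the convergence is a tightness-plus-uniqueness argument. Using the mild formulation $Z_\eps(t,x) = (p_t^\eps * Z_\eps(0,\cdot))(x) + \int_0^t p_{t-s}^\eps * dM_\eps(s,\cdot)(x)$ together with the $L^p$ bounds produced by the quadratic variation estimate, I would prove moment bounds $\sup_\eps\sup_{t\le T,\,x\in K}E[Z_\eps(t,x)^p]<\infty$ and H\"older estimates in $(t,x)$ of Kolmogorov type; this yields tightness of $\{Z_\eps\}$ in $C([0,T],C(\R))$. Any subsequential limit, by passing to the limit in both the mild form and in the predictable brackets, is an adapted mild solution of $\partial_t Z = \tfrac12\partial_x^2 Z - Z\dot W$ with the prescribed initial data (the lattice initial profile, rescaled, converges to $\exp(B(x))$ when one starts from Bernoulli-$1/2$ by Donsker). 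Walsh's uniqueness for the SHE then identifies the limit as $Z$, and taking $-\log$ gives ${\mathcal P}_\eps\Rightarrow{\mathcal P}$.

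The main obstacle is the quadratic-variation step: showing that the nonlinear site-by-site bracket really does close in $Z_\eps^2$ to leading order, with fluctuations small enough to neglect after the diffusive spatial average built into convolving with the heat kernel. All the delicate cancellations rely on the precise weak-asymmetry window $q-p=\eps^{1/2}$ and on the exact compensating drift chosen in $v_\eps$; outside this window one obtains either a deterministic Burgers limit (hyperbolic scaling) or trivial Edwards--Wilkinson behavior, and the multiplicative-noise structure is lost. Once this computation is justified, the remaining tightness and martingale-problem identification steps are standard.
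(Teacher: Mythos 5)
Your proposal reconstructs the G\"artner/Bertini--Giacomin proof from scratch: you pass to the microscopic Cole--Hopf field $Z_\eps = \exp(-h_\eps)$, observe that the choice of $v_\eps$ (including the $-\frac{1}{4!}\eps^{-1/2}$ correction) closes the generator computation into a semidiscrete stochastic heat equation $dZ_\eps = \Delta_\eps Z_\eps\,dt + dM_\eps$, control the predictable bracket using $\eta^2 = \eta$, and finish with tightness in the mild formulation plus Walsh uniqueness. As an outline of the theorem being invoked this is essentially faithful to \cite{BG}. But it is not what the paper does, nor is it what the statement requires: the proposition is an explicit citation of \cite{BG}, and the paper's proof consists solely of reconciling two notational discrepancies. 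First, the height function used here differs from the one in \cite{BG} by a bounded random shift, $\zeta^{\text{BG}}(t,x) = \zeta_\eps(t,x) - 2(1-\eta(0,0))$, which after the $\eps^{1/2}$ rescaling contributes an $O(\eps^{1/2})$ uniformly bounded term and thus does not affect the weak limit. Second, \cite{BG} linearly interpolates the height function and works in $D([0,\infty);C(\R))$, while this paper keeps the piecewise-constant field in $D([0,\infty);D_u(\R))$; these two versions are uniformly within $\eps^{1/2}$ of each other, so the change of path space is also harmless.

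So there is a gap in the sense that the actual content of the proposition's proof --- checking that the cited result transfers to the slightly different height function and path space used here --- is entirely absent from your write-up. What you have instead is a (partial) sketch of the proof of the cited theorem, in which the one step you yourself flag as ``the main obstacle'' (that the site-by-site bracket collapses to $Z_\eps^2(\tfrac12 + o(1))$ with a negligible fluctuating remainder) is exactly the technical core of \cite{BG} and cannot be settled by asserting an $o(1)$; it requires the replacement-type estimates and the specific form of the exclusion equilibrium at $\rho=1/2$. For the purpose of this proposition you should simply cite \cite{BG} for the weak convergence of the BG-normalized interpolated height process, then add the two observations above to transport the result to $h_\eps$ as defined in \eqref{scaledhgt}.
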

 
\begin{proof}  This was proved in  \cite{BG} using  the slightly different height function $ \zeta^{\text{BG}}(t,x)$ related to ours by
\be   \zeta^{\text{BG}}(t,x)=\zeta_\eps(t,x)-2(1-\eta(0,0)).  \label{heightconn}\ee
The result follows for $\zeta_\eps(t,x)$ because the difference is bounded.
Note also that 
\cite{BG} makes the height functions continuous in space by linear interpolation and uses the 
smaller path space $D([0,\infty); C(\mathbb R))$.   This makes no difference because
$h^{\text{BG}}_\eps(t,x)$ and its continuous version are uniformly within distance $\eps^{1/2}$
of each other.  
\end{proof} 
 The rescaled velocity field is 
\begin{equation}
u_\eps(t,x) = \eps^{-1/2} \hat\eta( \eps^{-2}t, [ \eps^{-1} x] )
\label{defS} \end{equation}
with $[\cdot]$ defined by (\ref{closestinteger}).
The  rescaled space-time correlation functions are given by 
\begin{equation}
S_\eps(t,x) = E_\eps[ u_\eps (t,x) u_\eps(0,0) ]. 
\end{equation}  
For $x\in\R$ let us define a discrete Laplacian   by
\begin{equation}
\label{delta}
\Delta_\epsilon f(x) = \tfrac12 \epsilon^{-2} ( f(x+\epsilon) -2f(x) + f(x-\epsilon) ) 
 \end{equation}
and a discrete absolute value   in terms of the closest integer function by \begin{equation}
|x|_\epsilon = \abs{\eps[{\eps^{-1}x}]}. 
\end{equation}
We begin by building on some well-known  properties.   The assumption
that density is $1/2$ is used repeatedly.  

\begin{prop} \label{prop1} {\it 1.} For a fixed $t>0$,   $S_\eps(t,x)$ is a probability density on $\R$ 
and symmetric in  $x$  {except at $x\in  \epsilon(1/2+ \mathbb Z)$}.

\noindent{\it 2.}   $S_\eps(t,x) =\Delta_\eps\Var (h_\eps(t,x))$. 
 $\Var (h_\eps(t,x))-|x|_\epsilon$ is symmetric in  $x\notin  \epsilon(1/2+ \mathbb Z)$, non-decreasing in $|x|_\epsilon$,  nonnegative, and for each fixed $(\epsilon, t)$ 
has exponentially decaying  tails in $x$. 

\noindent{\it 3.}  For  $1\le m<\infty$,  
\be \int_\R |x|_\epsilon^m S_\eps(t,x) dx = \int_\R \Delta_\eps (|x|_\epsilon^m)\, [ \Var (h_\eps(t,x)) - |x|_\epsilon ] \,dx.\label{v-|x|}\ee

\end{prop}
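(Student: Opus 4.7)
For part 1, I would rewrite $S_\eps(t,x) = \eps^{-1}R(\eps^{-2}t,[\eps^{-1}x])$ with $R(\tau,n) := E[\hat\eta(\tau,n)\hat\eta(0,0)]$, and establish three features of $R$. Nonnegativity is the classical second-class-particle identity: at density $\tfrac12$, $R(\tau,n) = \PS(X(\tau)=n)$ where $X(\tau)$ is the displacement at time $\tau$ of a second-class particle started at $0$ in a basic coupling, so $R\ge 0$ and $\sum_n R(\tau,n)=1$, giving $\int_\R S_\eps(t,x)\,dx=1$. For symmetry, particle--hole exchange $\eta\mapsto 1-\eta$ swaps the rates $p\leftrightarrow q$ while preserving the Bernoulli-$\tfrac12$ law, and spatial reflection $x\mapsto -x$ swaps them again, so their composition is a measure- and dynamics-preserving symmetry that carries $\hat\eta(\tau,n)\hat\eta(0,0)$ to $\hat\eta(\tau,-n)\hat\eta(0,0)$. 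This gives $R(\tau,n)=R(\tau,-n)$, hence $S_\eps(t,x)=S_\eps(t,-x)$ except at $x\in\eps(\tfrac12+\Z)$ where $[\eps^{-1}x]\ne -[\eps^{-1}(-x)]$.

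For part 2, the pivot is that invariance of Bernoulli-$\tfrac12$ makes $\{\hat\eta(\tau,y):y\in\Z\}$ i.i.d.\ at every fixed $\tau$, so $\Var(h_\eps(t,x)-h_\eps(t,x'))=|[\eps^{-1}x]-[\eps^{-1}x']|\,\eps$. Writing $n=[\eps^{-1}x]$, $A=h_\eps(t,x+\eps)-h_\eps(t,x)=\eps^{1/2}\hat\eta(\eps^{-2}t,n+1)$ and $B=h_\eps(t,x)-h_\eps(t,x-\eps)=\eps^{1/2}\hat\eta(\eps^{-2}t,n)$, the single-time independence $\Cov(A,B)=0$ reduces the second difference to
\begin{equation*}
2\eps^2\,\Delta_\eps\Var h_\eps(t,x) = 2\eps + 2\,\Cov(A-B,\,h_\eps(t,x)).
\end{equation*}
On the other hand a direct rewrite of $S_\eps$ via the unit increments of $h_\eps$ gives $\eps^2 S_\eps(t,x) = E[(h_\eps(t,x)-h_\eps(t,x-\eps))(h_\eps(0,0)-h_\eps(0,-\eps))]$. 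Matching these two expressions via the space-time stationarity of $u_\eps$ (the discrete avatar of Proposition~2.1.5) collapses the residual cross-covariance into $R(\eps^{-2}t,n)$ and produces $\Delta_\eps\Var h_\eps=S_\eps$. From here the properties of $F(x):=\Var h_\eps(t,x)-|x|_\eps$ fall out: symmetry from part 1; $\Delta_\eps F=S_\eps\ge 0$ off the origin makes $F$ discretely convex on $\{x>0\}$ and $\{x<0\}$, which together with decay at infinity pins down its sign and monotonicity in $|x|_\eps$. Exponential tails of $F$ at fixed $(\eps,t)$ follow from super-exponential decay of $R(\tau,n)$ in $|n|$, which comes from Bessel-type bounds for the symmetric exclusion semigroup plus a coupling or Girsanov perturbation argument for the $\eps^{1/2}$-asymmetry.

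Part 3 is summation by parts on $\eps\Z$. Starting from $S_\eps=\Delta_\eps\Var h_\eps$, the exponential decay of $F$ justifies discarding boundary terms at $\pm\infty$ and yields
\begin{equation*}
\int_\R |x|_\eps^m\, S_\eps(t,x)\,dx=\int_\R |x|_\eps^m\,\Delta_\eps\Var h_\eps(t,x)\,dx=\int_\R \Delta_\eps(|x|_\eps^m)\,\Var h_\eps(t,x)\,dx.
\end{equation*}
Splitting $\Var h_\eps=|x|_\eps+F$, the cross term $\int|x|_\eps^m\,\Delta_\eps|x|_\eps\,dx$ vanishes because $\Delta_\eps|x|_\eps$ is supported on a single cell at the origin and $|0|_\eps^m=0$ for $m\ge 1$, leaving exactly (\ref{v-|x|}).

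\textbf{Main obstacle.} The technical heart is the identity $S_\eps=\Delta_\eps\Var h_\eps$. At $t=0$ it reduces to the trivial computation $\Delta_\eps|x|_\eps = \eps^{-1}\ind_{\{[\eps^{-1}x]=0\}}$, but at positive times one must show that the covariances between the current $N(\eps^{-2}t,0)$ hidden inside $h_\eps(t,0)$ and the single-time increments $\hat\eta(\eps^{-2}t,y)$ reassemble into the time-displaced two-point function $R(\eps^{-2}t,n)$. Space-time stationarity of $u_\eps$ together with the symmetry from part 1 is exactly what forces this collapse to occur.
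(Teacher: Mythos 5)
Your plan tracks the paper's own proof quite closely in outline, but it leaves the central identity unestablished and has a couple of structural slips worth flagging.

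\textbf{Part 1} is the paper's argument: $S_\eps$ is a probability density via the second-class-particle representation \eqref{Ssc}, and symmetry comes from the particle--hole--reflection map $\tilde\eta(t,k)=1-\eta(t,-k)$, which is exactly your composition of particle--hole exchange with spatial reflection. No disagreement.

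\textbf{Part 2.} The key identity $S_\eps=\Delta_\eps\Var(h_\eps)$ is precisely what you call the ``main obstacle,'' and your proposal does not actually prove it: you produce $2\eps^2\Delta_\eps\Var h_\eps = 2\eps + 2\Cov(A-B,h_\eps(t,x))$ and the rewrite $\eps^2 S_\eps(t,x)=E[(h_\eps(t,x)-h_\eps(t,x-\eps))(h_\eps(0,0)-h_\eps(0,-\eps))]$, then assert that space-time stationarity ``collapses'' one into the other, without carrying out the reduction. The paper does not prove this either; it cites the explicit computation in \cite{PS}. Your sketched route — second-difference the variance, express it as a cross-time covariance of increments using stationarity — is the discrete shadow of the continuum computation the paper performs for \eqref{td}, and that computation really does require writing $h_\eps(t,x)$ as $h_\eps(0,0)$ plus a centered piece and using space-time stationarity of the \emph{increments} (not of $h_\eps$ itself). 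If you insist on proving the identity rather than citing it, that step must be filled in; as written it is an assertion. For the remaining claims in part 2, your discrete-convexity argument ($\Delta_\eps F\ge 0$ plus decay forces $F\ge 0$ and monotonicity) is exactly what the paper does in \eqref{vLapl}, so that is fine. What the paper proves \emph{first}, and you skip, is the symmetry and exponential decay of $F=\Var h_\eps-|x|_\eps$: the paper deduces these from the identity $\Var(\zeta_\eps(t,x))-|x|=4\Cov(N(t,0),N(t,x))$, obtained by combining the covariance cancellation \eqref{sym} with a current-covariance lemma from \cite{QV}, and then uses the finite-range property of ASEP for exponential decay. Your alternative ``Bessel-type bounds for the symmetric semigroup plus Girsanov for the $\eps^{1/2}$ asymmetry'' is plausible but substantially heavier machinery than necessary; plain finite-range/large-deviation bounds on the current already give what is needed.

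\textbf{Part 3.} The summation by parts is correct in spirit, but the order of operations is wrong as written. You move $\Delta_\eps$ onto $|x|_\eps^m$ \emph{before} splitting off $|x|_\eps$, producing $\int\Delta_\eps(|x|_\eps^m)\,\Var h_\eps\,dx$, whose integrand grows like $|x|^{m-1}$ and is not integrable. You must split first, as the paper does: write $\Delta_\eps\Var h_\eps = \Delta_\eps|x|_\eps + \Delta_\eps F$, kill the first term by its support at the origin, and only then sum by parts on the $F$-term, where exponential decay of $F$ makes both the resulting integral and the boundary contributions finite. This is a one-line fix, but in the order you chose the intermediate quantity is divergent.
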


\begin{proof} 
To see that $S_\eps(t,x)$ is a probability density, use the well-known connection
with the   second class 
particle:  
\be    S_\eps(t,x) = \eps^{-1} \Pv^{1/2}_\eps \{{\sc}(\eps^{-2}t)=[\eps^{-1}x]\}.   \label{Ssc}\ee
Here $\Pv^{1/2}_\eps$ is the coupling measure of two ASEP's that start   with
one discrepancy at the origin and Bernoulli(1/2) occupations elsewhere, and 
$\sc(\cdot)$ is the position of the second class particle.  This setting is discussed
  in Section \ref{second}.  A proof of \eqref{Ssc} can be found for example
in \cite{bala-sepp-08ALEA}.  
Symmetry of $S_\eps(t,x)$ can be  seen from the definition \eqref{defS} 
and  the fact that  
\be  
\wt\eta(t,k)=1-\eta(t,-k) \label{etahat}\ee
defines an ASEP $\wt\eta$ equal in distribution to $\eta$. 
 See \cite{PS} for the    explicit computation that proves   {\it 2}.  
 
We now work towards {\it 3}.  Let $N(t,x)$ denote the current across the bond between site $x$ and $x+1$ up to time $t$.
 We start by checking  that 
  \be \label{sym}
\Cov\Bigl[ N(t,0), \sum_{y=-x+1}^x\eta(t,y)\Bigr]=0.
\ee
With $\wt\eta$ as in \eqref{etahat},   an $\wt\eta$-particle jump from $x$ to $y$ 
is the same as an $\eta$-hole jump from $-x$ to $-y$.  Hence 
  $\wt N(t,0)=N(t,-1)$.  By the distributional equality $\wt\eta\overset{d}=\eta$, 
$\Cov\Bigl[ N(t,0), \sum_{y=-x+1}^x\eta(t,y)\Bigr]=
\Cov\Bigl[ \wt N(t,0), \sum_{y=-x+1}^x\wt\eta(t,y)\Bigr]=\Cov\Bigl[ N(t,-1), -\,\sum_{y=-x}^{x-1}\eta(t,y)\Bigr]
=
-\,\Cov\Bigl[ N(t,0), \sum_{y=-x+1}^x\eta(t,y)\Bigr]
$
and \eqref{sym} is verified. 

Combining  Lemma 3 of \cite{QV}   with \eqref{sym} gives  
 \begin{equation}\label{varminusx}
 \Var (\zeta_\eps(t,x))-|x| = 4 \,\Cov (N(t,0), N(t,x)), \quad x\in\Z.  
\end{equation}
The right-hand side of \eqref{varminusx} is symmetric in $x$  by
invariance under  spatial translations.   

Next, note that by the finite range of ASEP, 
  for fixed $\eps>0$ and  $t\ge 0$ there exist  $C_1<\infty$, $C_2>0$ such that 
\begin{equation}\label{cov2}
\bigl \lvert \Cov (N(t,0), N(t,x)) \bigr\rvert  \le C_1\exp\{ -C_2 |x| \}.
\end{equation}
(see Lemma 4 of \cite{QV} for details.)

We next argue the   nonnegativity of \eqref{varminusx}.  
By symmetry it suffices to consider $x\in\Z_+$.   For $x>0$, $ v(x)=\Var (\zeta_\eps(t,x))$ 
and $v(x)-\abs{x}$ have the same discrete Laplacian.  By  \eqref{varminusx} and \eqref{cov2} 
both $v(x)-\abs{x}$ and $\Delta_1 v(x)$ decay exponentially, and, by 
  {\it 1} and {\it 2},  $\Delta_1 v(x)\ge 0$.  
Then for $x\in\Z_+$
\be
v(x)-\abs{x} =\sum_{k=x}^\infty \sum_{\ell= k}^\infty  2\Delta_1 v(\ell+1) > 0.  \label{vLapl}\ee
This also shows that $v(x)-\abs{x}$ is strictly decreasing for $x\in\Z_+$, and thereby 
\eqref{varminusx} is strictly decreasing in $\abs{x}$. 
  
Putting the scaling into \eqref{varminusx} gives
\be\begin{aligned}
   \Var (h_\eps(t,x))-|x|_\epsilon &= 4\eps \Cov \bigl(N(\eps^{-2}t,0), 
   N(\eps^{-2}t,[{\eps^{-1}x}])\bigr).  \end{aligned}\label{varminusx2}\ee
This proves (3). 
To prove (4), start with the observation 
\[  \int_{-N}^N |x|_\epsilon^m \,\Delta_\eps (|x|_\epsilon)\, dx=0.  \]
Then by (2) and by integration by parts (that is, by shifting the integration variable), 
 \begin{align*}
 \int_{-N}^N |x|_\epsilon^m S_\eps(t,x) dx &= 
  \int_{-N}^N |x|_\epsilon^m \,\Delta_\eps [ \Var (h_\eps(t,x)) - |x|_\epsilon ]\, dx \\
&= \int_{-N}^N \Delta_\eps (|x|_\epsilon^m)\, [ \Var (h_\eps(t,x)) - |x|_\epsilon ]\, dx + B_{\eps, t, m}(N)
\end{align*}
%
where  $B_{\eps, t, m}(N)$ are sums of integrals 
of $ \abs{x\pm\eps}_\epsilon^m[\Var (h_\eps(t,x)) - |x|_\epsilon]$ 
over intervals of length $\eps$ around $\pm N$.     By {\it 3}
 these are  exponentially small in $N$ 
as   $\eps$, $t$ and $m$ are fixed.  Taking $N\to \infty$  gives {\it 4}.
 \end{proof}

 The key technical estimate which will be proved in  Section \ref{second} is

\begin{theorem}  With the same constants as in Theorem \ref{theoremh}, for all $0<\epsilon < 1/4$,  $1\le m<3$, and $t\ge c_0$, 
\be C_1t^{2m/3} \le  \int |x|_\epsilon^mS_\eps(t,x)dx \le C_2t^{2m/3}.   
\label{Qmoma}\ee\label{Qmomthma}\end{theorem}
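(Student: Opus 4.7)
The starting point is to reinterpret the moment integral via Proposition~\ref{prop1}. By part~(1), $S_\eps(t,x) = \eps^{-1}\Pv^{1/2}_\eps\{\sc(\eps^{-2}t)=[\eps^{-1}x]\}$, so
\[
\int |x|_\eps^m\, S_\eps(t,x)\,dx \;=\; \eps^m\,\Ev^{1/2}_\eps\bigl[|\sc(\eps^{-2}t)|^m\bigr],
\]
reducing the task to estimating moments of the second-class particle in WASEP with asymmetry $q-p=\eps^{1/2}$ at microscopic time $T=\eps^{-2}t$. Equivalently, by part~(4) together with the current identity in the proof of part~(3), the integral equals $\int \Delta_\eps(|x|_\eps^m)\,[\Var(h_\eps(t,x))-|x|_\eps]\,dx$, i.e.\ a weighted sum of current covariances $\Cov(N(T,0),N(T,y))$. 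Both viewpoints will be used in tandem.

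For the \textbf{upper bound} I would implement a Bal\'azs--Sepp\"al\"ainen-type coupling argument adapted to the weakly asymmetric regime. Couple basic ASEP with Bernoulli$(1/2)$ initial data to a perturbed copy obtained by inserting a discrepancy at the origin, so that the discrepancy evolves as the second-class particle $\sc(\cdot)$. Differentiating the mean current identity $E^\rho[N(T,0)]=\rho(1-\rho)(q-p)T$ in $\rho$ and combining with the standard variance formula yields an identity that controls $\Var(N(T,0))$ in terms of $\Ev^{1/2}[|\sc(T)|]$. Together with the discrete-Laplacian expression in part~(4), this closes into a self-improving inequality of the schematic form ``$m$-th moment $\le$ polynomial in a lower moment times $T$-power,'' which at density $1/2$ (where the characteristic speed vanishes and no drift compensation is required) iterates to the bound
\[
\Ev^{1/2}_\eps\bigl[|\sc(T)|^m\bigr] \;\le\; C\, T^{2m/3}\,(q-p)^{-m/3}.
\]
Substituting $T=\eps^{-2}t$, $q-p=\eps^{1/2}$ and multiplying by the $\eps^m$ prefactor produces exactly $C_2\, t^{2m/3}$; the restriction $m<3$ reflects that the self-bounding argument, as in \cite{bala-sepp-08ALEA}, only closes below the $t^{2m/3}$-exponent $=1$ threshold where fluctuations of the current lose their meaning.

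For the \textbf{lower bound} I would exploit flux convexity. The mean current $\rho(1-\rho)(q-p)$ has second derivative $-2(q-p)=-2\eps^{1/2}$ at $\rho=1/2$, which is strictly nonzero on the WASEP scale. A Ferrari--Fontes/Landim--Yau-type identity relates $\partial_\rho^2 E^\rho[N(T,0)]\big|_{\rho=1/2}$ to $(q-p)\,\Ev^{1/2}[|\sc(T)|]$, forcing a quantitative lower bound on the mean displacement of $\sc$. Combined with the upper bound for the variance of $\sc(T)$, Cauchy--Schwarz converts a lower bound on $\Ev[|\sc(T)|]$ into one on $\Ev[|\sc(T)|^m]$ for all $m\ge 1$, yielding $C_1 t^{2m/3}$ after rescaling. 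The nonnegativity and monotonicity in part~(2) of Proposition~\ref{prop1} ensure that no cancellation spoils the lower bound when passing through the integration-by-parts formula.

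The \emph{main obstacle} is making every constant in these coupling identities and self-bounding iterations uniform in the weak asymmetry $\eps^{1/2}$, so that upon substituting $T=\eps^{-2}t$ and $q-p=\eps^{1/2}$ the $\eps$-powers cancel exactly and the resulting constants $c_0,C_1,C_2$ depend only on $m$ (and the KPZ coefficients $\sigma,\nu,\lambda$ that will be restored after the diffusive scaling). In particular, any comparison with the symmetric ($p=q$) or strongly asymmetric (TASEP) cases must be quantitative in $\eps$, since both endpoints of the WASEP regime degenerate.
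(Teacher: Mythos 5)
Your reduction to second-class-particle moments via $(\ref{Ssc})$ is exactly the paper's first step, and the appeal to Bal\'azs--Sepp\"al\"ainen coupling techniques identifies the right toolbox. But beyond that opening, the proposal diverges from what actually works, and several of the pivotal steps are either misstated or would not close.

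For the upper bound, your intermediate target
$\Ev^{1/2}_\eps[|\sc(T)|^m] \le C\,T^{2m/3}(q-p)^{-m/3}$
has the wrong exponent on the asymmetry: to cancel the $\eps^m$ prefactor after setting $T=\eps^{-2}t$ and $q-p=\eps^{1/2}$ you need $(q-p)^{+2m/3}T^{2m/3}$, i.e.\ $\eps^{m/3}T^{2m/3}$, not $\eps^{-m/6}T^{2m/3}$. More substantively, the identity $\Var^\rho[J_x(T)] = \rho(1-\rho)\,\Ev^\rho|\sc(T)-x|$ (equation (\ref{JQ}) in the paper) is \emph{not} obtained by differentiating the mean current in $\rho$; it is a separate coupling identity. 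Differentiating $E^\rho[N(T,0)]$ in $\rho$ gives the \emph{speed} identity $\Ev^\rho[\sc(T)]=(1-2\rho)(q-p)T$, which is a different statement. The actual engine of the upper bound is the \emph{label walk} bound (Lemma 5.2 of \cite{bala-sepp-08ALEA}): the second-class particle inside a three-process coupling $\zeta\ge\zeta^-\ge\eta$ sits on a family of $\zeta-\eta$ second-class particles $\{X_j\}$, and the random label $m(t)$ tracking which $X_j$ it occupies admits exponential tail bounds. Your sketch omits this, and without it there is no way to translate a second-class particle excursion into a current deviation that can be Chebyshev'd against the variance identity.

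For the lower bound, the proposal does not work as stated. You invoke a ``Ferrari--Fontes/Landim--Yau-type identity'' relating $\partial_\rho^2 E^\rho[N(T,0)]\big|_{\rho=1/2}$ to $(q-p)\,\Ev^{1/2}[|\sc(T)|]$, but $\partial_\rho^2 E^\rho[N(T,0)] = -2(q-p)T$ is a deterministic constant with no content about $\Ev|\sc(T)|$; no identity of the claimed form exists. The paper's lower bound is a genuinely delicate coupling argument: the second-class particle is started at a shifted position $n=\fl{V^\rho t}-\fl{V^\lambda t}+u$ with $\lambda=\rho - b\,t^{-1/3}\eps^{-1/6}$ chosen so that the already-proved upper bound forces $\sc^{(n)}(t)>\fl{V^\rho t}$ with probability $\ge 1/2$; then the label-walk bound plus the variance/second-class-particle identity control the current deviations, and finally a Radon--Nikodym derivative estimate (bounded in $L^2$ by the specific choice of $\lambda$) transfers the event back to the stationary measure where Chebyshev applies. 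None of this structure --- in particular the shifted initial position and the $d\gamma/d\nu^\rho$ comparison --- is present in your sketch, and I do not see how to recover the lower bound without it. A smaller point: converting $\Ev|\sc(T)|\ge c$ into $\Ev|\sc(T)|^m\ge c^m$ for $m\ge 1$ is Jensen's inequality (convexity of $x\mapsto x^m$), not Cauchy--Schwarz.
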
 


\begin{cor}\label{cor1}  
{\it 1.}  For  $0<\epsilon < 1/4$ and   $t\ge c_0$, 
  \be C_1t^{2/3}\le \Var (h_\eps(t,0))\le C_2t^{2/3}. \label{varheps}\ee 

\noindent{\it 2.}  For each  $t>0$, the family of probability measures 
$\{S_\eps(t,x)dx\}_{0<\eps< 1/4}$ is tight.
\end{cor}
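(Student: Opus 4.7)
The plan is to deduce both parts of Corollary \ref{cor1} directly from Theorem \ref{Qmomthma} via identity \eqref{v-|x|}. The key preparatory computation is that of $\Delta_\eps(|x|_\eps)$: since $|x|_\eps$ takes the piecewise constant value $\eps|k|$ on the interval $[\eps(k-\tfrac12),\eps(k+\tfrac12))$, on any such interval with $k\neq 0$ the three successive values $\eps(k-1), \eps k, \eps(k+1)$ vary linearly and the second difference vanishes, while on the central interval the three values are $\eps, 0, \eps$, yielding
\[
\Delta_\eps(|x|_\eps) = \eps^{-1}\mathbf{1}_{[-\eps/2,\,\eps/2)}(x).
\]

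For part (1), note from \eqref{scaledhgt} that $h_\eps(t,x)$ depends on $x$ only through $[\eps^{-1}x]$, so on $[-\eps/2,\eps/2)$ both $\Var(h_\eps(t,x))$ and $|x|_\eps$ are constant, with values $\Var(h_\eps(t,0))$ and $0$ respectively. Substituting the Laplacian formula above into \eqref{v-|x|} with $m=1$ yields the exact identity
\[
\int_\R |x|_\eps\, S_\eps(t,x)\,dx \;=\; \Var\bigl(h_\eps(t,0)\bigr),
\]
so \eqref{varheps} is precisely the $m=1$ case of Theorem \ref{Qmomthma}.

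For part (2), the $m=1$ bound from Theorem \ref{Qmomthma} supplies, for every $t\ge c_0$ and all $0<\eps<1/4$, the uniform first-moment estimate $\int |x|_\eps\, S_\eps(t,x)\,dx \le C_2\, t^{2/3}$. Because $\bigl||x|_\eps-|x|\bigr|\le \eps/2 \le 1/8$, this transfers to a uniform first-moment bound for the measures $S_\eps(t,x)\,dx$ themselves, and Markov's inequality $\int_{|x|>K}S_\eps(t,x)\,dx \le K^{-1}(C_2 t^{2/3}+1/8)$ delivers tightness. The range $0<t<c_0$ is handled by a cruder argument: via the second-class-particle representation \eqref{Ssc}, $|\sc(\eps^{-2}t)|$ is dominated by the Poisson number of jump attempts up to time $\eps^{-2}t$, which furnishes an $\eps$-uniform (if non-sharp) moment bound sufficient for tightness. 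No substantive obstacle arises here; all the real work lies in Theorem \ref{Qmomthma}, and the corollary is a direct repackaging, the only minor technical points being the explicit evaluation of $\Delta_\eps(|x|_\eps)$ and the separate small-$t$ argument in part (2).
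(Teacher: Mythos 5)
Your part (1) is correct and identical to the paper's argument: evaluate $\Delta_\eps(|x|_\eps)=\eps^{-1}\mathbf{1}_{[-\eps/2,\eps/2)}$, note $\Var(h_\eps(t,\cdot))$ and $|x|_\eps$ are constant on that interval (with $|x|_\eps=0$ there), and read off $\Var(h_\eps(t,0))=\int|x|_\eps S_\eps(t,x)\,dx$ from \eqref{v-|x|}. For part (2) in the regime $t\ge c_0$ you also match the paper. The gap is in your small-$t$ argument.

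For $0<t<c_0$, your proposed Poisson domination does not close the argument. Dominating $|\sc(\eps^{-2}t)|$ by the number of jump attempts gives $E|\sc(\eps^{-2}t)|\le C\eps^{-2}t$, whence via \eqref{Ssc}
\[
\int|x|_\eps\,S_\eps(t,x)\,dx=\eps\,\Ev\bigl|\sc(\eps^{-2}t)\bigr|\le C\,\eps^{-1}t,
\]
which blows up as $\eps\searrow0$, so you do \emph{not} get a moment bound uniform in $\eps$, and hence no tightness. The issue is that the second-class particle is observed at microscopic time $\eps^{-2}t$, so a linear-in-time estimate is far too crude; you would need at least diffusive control, which is not elementary for this process. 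The paper instead invokes Proposition 4 of \cite{QV}, which shows that $\int|x|_\eps^2\,S_\eps(t,x)\,dx=\eps^2\,\Ev\bigl(|\sc(\eps^{-2}t)|^2\bigr)$ is monotone nondecreasing in $t$; this lets one transfer the uniform bound at $t=c_0$ (from Theorem \ref{Qmomthma}) down to all $0<t<c_0$. That monotonicity is a genuinely nontrivial input, not replaceable by a crude jump-count bound, so your dismissal of the small-$t$ case as handled by a ``cruder argument'' is where the proposal fails.
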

\begin{proof} Part {\it 1} follows from Theorem \ref{Qmomthma} and case  $m=1$ of
 {\it 4} of 
  Proposition \ref{prop1} because $\Delta_\eps (|x|_\epsilon)=\eps^{-1}$ for 
  $x\in[-\eps/2,\eps/2)$ and vanishes elsewhere. 

For $t\ge c_0$  tightness of $\{S_\eps(t,x)dx\}_{0<\eps< 1/4}$ 
 follows from the upper bound in \eqref{Qmoma}.  For 
$0<t<c_0$ recall the second class particle connection \eqref{Ssc}. 
Proposition 4 in \cite{QV} proves that the second moment 
\[   \int |x|_\epsilon^2 S_\eps(t,x)dx = 
\eps^2 \Ev\bigl( \abs{{\sc}(\eps^{-2}t)}^2\, \bigr) \]
is monotone nondecreasing in $t$.  Thus the large-$t$ bound gives
the tightness for all $t>0$.  
\end{proof}

\section{Proofs of the main results}
\label{proofs}
\setcounter{equation}{0}

As a preliminary point we discuss the regularity of 
$\Var(h(t,x))$.  
The control comes from  the weak limit $h_\eps\to h$.  We have
\[
\Var(h_\eps(t,0))=\int \abs{x}_\eps\, S_\eps(t,x)\,dx 
\le \biggl( \int \abs{x}_\eps^2\, S_\eps(t,x)\,dx   \biggr)^{1/2}.
\] 
As mentioned in the proof of Corollary \ref{cor1}, this last 
quantity is nondecreasing in $t$.  Consequently by the upper
bound in \eqref{Qmoma} and the i.i.d.\ mean zero spatial 
increments of $h_\eps(t,x)$ [see \eqref{psheight}] we conclude
that  $\Var(h_\eps(x,t))$ 
is locally bounded as a function of $(t,x)$, 
uniformly in $\eps>0$.  By the weak limit
$\Var(h(t,x))\le \varliminf_{\eps\to 0}\Var(h_\eps(t,x))$
and so    $\Var(h(x,t))$ 
is locally bounded.   
 The $x$-symmetry of $\Var (h(t,x))$ follows from
part  {\it 6} of Proposition \ref{thm1}, 
or from the weak limit $h_\eps\to h$ 
 and the distributional symmetry of  $\zeta_\eps(t, \,\cdot)$.
For any fixed $x_0$, $h(t,x)-h(t,x_0)$ is a Brownian
motion in $x$ and hence the continuity of $x\mapsto \Var (h(t,x))$. 
By studying the stochastic heat equation we  prove in the Appendix  that 
\be \Var (h(t,x))-|x| \to 0 \quad\text{ as $\abs{x}\to\infty$.}
\label{auxh0}\ee

We turn to proving the main results.

\begin{proof}[Proof of (\ref{preq})]  From the definitions we have for test functions $\varphi, \psi \in C_c^\infty(\R)$
\be\begin{aligned}
&E[ \langle \varphi',h_\eps(t) \rangle \langle \psi', h_\eps(0)\rangle ]  = 
E[ \langle \varphi,u_\eps(t) \rangle \langle \psi, u_\eps(0)\rangle ] +O(\eps) \\[4pt] 
&\qquad=  \frac{1}{2} \int  \Bigl[ \,\int \varphi \Big( \frac{y+x}{2} \Bigr) \psi \Big( \frac{y-x}{2} \Bigr)dy \Bigr] S_\eps(t,x)\,dx
 +O(\eps).
\end{aligned}\label{preqi}
\ee
Let $S(t,dx)$ denote a weak limit point of $S_\eps(t,x)dx$
as $\eps\searrow 0$.
Taking the limit in \eqref{preqi}, 
  the last expression becomes the right-hand side of (\ref{preq}). 

Convergence of the first expectation in   \eqref{preqi} to the left-hand side of \eqref{preq}
follows from the weak convergence $h_\eps\to h$ with the following additional justification.   Since the limit process $h$
is continuous (in time),  the pair  $(h_\eps(0), h_\eps(t))$ converges weakly to $(h(0),h(t))$. 
Integration against a $C_c^\infty(\R)$ function is a continuous function on $D_u(\R)$. 
By an application of   uniform integrability  and Schwarz inequality,  for 
  the claimed convergence it is enough to show the $L^1$ boundedness of 
 $\abs{\langle \varphi',h_\eps(t) \rangle}^4$.  To see this   we  first 
 transform the inner product (this is the beginning of the computation that 
 one performs to check \eqref{preqi}):
\begin{align*}
 \langle \varphi',h_\eps(t) \rangle 
 &= \eps^{1/2}\sum_{k\in\Z}  \zeta_\eps(\eps^{-2}t, k) 
 \bigl[  \varphi((k+\tfrac12)\eps) - \varphi((k-\tfrac12)\eps)\bigr]\\
 &= -\,\eps^{1/2}\sum_{k\in\Z} \varphi((k-\tfrac12)\eps) \hat\eta(\eps^{-2}t, k).
\end{align*}
(The constant term on the right-hand side of definition \eqref{scaledhgt} vanishes since
we are integrating the height function against a derivative.) 
This is a sum of independent  mean zero random variables, and 
\begin{equation}
E \abs{\langle \varphi',h_\eps(t) \rangle}^4 \le C\eps^2 \sum_k  \varphi((k-\tfrac12)\eps)^4
+ C\eps^2 \sum_{k,\ell}  \varphi((k-\tfrac12)\eps)^2  \varphi((\ell-\tfrac12)\eps)^2 \nonumber \end{equation}
which is bounded uniformly in $\epsilon$.
\end{proof}

\begin{proof}[Proof of Theorem \ref{theoremu} and the upper bound of Theorem \ref{theoremh}]  Note first of all that from
part  {\it 6} of Proposition \ref{thm1}, it suffices to prove all results with $\lambda=1/2$, $\nu=1/2$,
$\sigma=1$.  

 The upper bounds of \eqref{bdsonh} and (\ref{bdsonu}) follow  from the 
weak convergence and from the 
upper bounds  in \eqref{Qmoma} and  in {\it 1} of Corollary \ref{cor1}.
 
Let $1\le m<3$. For the upper bound of (\ref{bdsonh1}) we collect these ingredients:
  Inequality 
$\Var (h_\eps(t,x)) - |x|_\eps  \ge 0$
  from  {\it 3} of Proposition \ref{prop1},  
  the fact that under the weak limit 
 \begin{equation}\liminf_{\eps\searrow 0} \bigl[ \Var (h_\eps( t, x)) -\abs{x}_\eps\bigr]
 \ge \Var (h(t,x))-\abs{x}, 
 \end{equation} 
and  for $x\ne 0$,  $\Delta_\eps(\abs{x}_\eps^m)\to m(m-1)\abs{x}^{m-2}/2$.  
Combine the upper bound   in
\eqref{Qmoma}  with identity \eqref{v-|x|},   let $\eps\searrow 0$ in  \eqref{v-|x|} and 
use  Fatou's Lemma.
 
To prove the lower bound of (\ref{bdsonu}), let $t>c_0$ be fixed and choose a non-negative smooth function $f(x)$ with compact support  such that $f(x) \ge |x|^m$ for $|x|\le At^{2/3} $.
We have \[\int f(x) S(t,dx) = \lim_{\eps\searrow 0} \int f(x) S_\eps(t,x)dx\] and furthermore
\[\int f(x) S_\eps(t,x)dx\ge \int_{|x|\le At^{2/3} } |x|^mS_\eps(t,x)dx .
\]
 Choose $\delta>0$ such that $m+\delta<3$.  By Chebyshev's inequality and Theorem \ref{Qmomthma},
\[
\int_{|x|\ge At^{2/3} } |x|^mS_\eps(t,x)dx \le A^{-\delta} t^{-2\delta/3} \int |x|^{m+\delta} S_\eps(t,x)dx \le A^{-\delta} Ct^{2m/3}.
\]
Hence,  for appropriately chosen $A$,
\[\int f(x) S_\eps(t,x)dx\ge  ( C^{-1}/2  )t^{2m/3}.
\]
Since this is true for all such $f$, we conclude that the lower bound of (\ref{bdsonu}) holds.  \end{proof}

\begin{proof}[Proof of (\ref{td})] 
 We are unable to do this by direct approximation 
 due to lack of control of moments of $h_\eps(t,x)$ higher than $2$.  
By direct calculation  $
  E [h_\eps(t,x)]
    = t/4! 
  $
 and we can take the $\eps\searrow 0$ limit by uniform
 integrability that follows from
the boundedness of $\Var(h_\eps(t,x))$ argued in the beginning of
this section. 
  Consequently \begin{equation}
  E [h(t,x)]
    = t/4! 
  .\end{equation}  From   
    \[\Var (h(t,x)) = E[( h(t,x)-h(0,0)-t/4!)^2]\]  
we deduce 
\begin{align*} \Delta_\delta \Var (h(t,x))&= \delta^{-2} 
E[ \left( {\scriptstyle{\frac{ h(t,x+\delta) +h(t,x)  }{2} }}-h(0,0) -t /4!\right) ( h(t, x+\delta)-h(t,x) ) \\[3pt] 
&\quad  - \left(  {\scriptstyle{\frac{ h(t,x) +h(t,x-\delta)  }{2} }}-h(0,0) -t/4! \right) ( h(t, x)-h(t,x-\delta) )].  \end{align*}
Since increments are mean zero and stationary in space  (part {\it 5} of Proposition \ref{thm1}), the latter is equal to
\[
\delta^{-2} E[ \left( h(0,\delta-x) -h(0,-x)  \right) ( h(t, \delta)-h(t,0) )].\]
Define the ``tent function'' $\varphi_\delta(x) = (\delta^{-1}- \delta^{-2}|x|)1_{|x|\le \delta} $.  We have shown that
\[
\Delta_\delta\Var(h(t,x))= \langle \tau_x \varphi_\delta, S(t) \rangle
\]
where the angle brackets denote integration and $ \tau_x \varphi_\delta(y)=   \varphi_\delta(y-x)$ and we used the definition (\ref{preq}) of $S(t,dx)$.
  Integrating against a test function $\psi\in C_c^\infty(\R)$ gives
\[ \int \Delta_\delta\psi (x)  \Var (h(t,x))dx = \langle \psi\ast\varphi_\delta, S(t) \rangle\] with $\ast$ denoting  convolution. Let $\delta\searrow 0$. Since $\Var(h(t,x))$ is locally bounded 
we can take the limit on the left. In the limit we   obtain
\[ \tfrac12 \int \partial_x^2\psi(x)  \Var(h(t,x))dx = \langle \psi, S(t) \rangle.  \] 
This completes the proof of Proposition \ref{pr}. \end{proof}

We now complete  the proof of
Proposition \ref{pr} and 
 Theorem \ref{theoremh} with the following

\begin{prop}  For $x\in\R$ and $t>0$, 
\be  \Var(h(t,x)) ={x}  +2 \int_{{x}}^\infty  (z-{x})\,S(t,dz). \label{hS}\ee
\label{varh-prop-9}\end{prop}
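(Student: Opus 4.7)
The plan is to show that both sides of \eqref{hS} have the same distributional second derivative, the same symmetry under $x\mapsto -x$, and the same asymptotic behavior as $\abs{x}\to\infty$, so that their difference, an affine function, must vanish identically. Let
\[
g(t,x)=x+2\int_x^\infty (z-x)\,S(t,dz).
\]
By Theorem \ref{theoremu} (already established in the preceding proof) the measure $S(t,\cdot)$ has finite first moment, so $g(t,x)$ is well-defined, continuous in $x$, and locally bounded. The same was shown for $\Var(h(t,x))$ at the beginning of this section.

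The main step is to compute $\partial_x^2 g(t,x)$ in $\mathcal{D}'(\R)$. Testing against $\psi\in C_c^\infty(\R)$ and exchanging the order of integration by Fubini,
\[
\int g(t,x)\,\psi''(x)\,dx
= \int x\,\psi''(x)\,dx + 2\int\!\!\int_{-\infty}^z \psi''(x)(z-x)\,dx\,S(t,dz).
\]
The first integral vanishes by compact support, and the inner integral equals $\psi(z)$ after two integrations by parts, giving $\int g(t,x)\psi''(x)\,dx=2\langle\psi,S(t)\rangle$. Hence $\tfrac12\partial_x^2 g(t,\cdot)=S(t,dx)$, which, combined with \eqref{td}, shows that $v(t,x):=\Var(h(t,x))-g(t,x)$ is a distribution on $\R$ with vanishing second derivative. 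By a classical argument, $v(t,x)=a(t)x+b(t)$ for some constants.

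Next I would verify that $g(t,\cdot)$ is symmetric. For $y>0$, splitting $\int_{-y}^\infty$ as $\int_{\R}-\int_{-\infty}^{-y}$, changing variables $w=-z$ and invoking the symmetry $S(t,-A)=S(t,A)$ from Proposition \ref{pr} together with $\int z\,S(t,dz)=0$,
\[
g(t,-y)=-y+2\int_{-y}^\infty(z+y)\,S(t,dz)=y+2\int_y^\infty(w-y)\,S(t,dw)=g(t,y).
\]
Combined with the symmetry $\Var(h(t,x))=\Var(h(t,-x))$ from part \textit{6} of Proposition \ref{thm1} (or directly from the weak limit and the distributional symmetry of $\zeta_\eps$), this forces $a(t)=0$.

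Finally, one checks the decay at infinity. For $x>0$, dominated convergence and the finite first moment of $S(t,\cdot)$ give
\[
g(t,x)-x = 2\int_x^\infty(z-x)\,S(t,dz)\le 2\int_x^\infty z\,S(t,dz)\xrightarrow[x\to\infty]{}0,
\]
and by the symmetry of $g(t,\cdot)$ just shown, $g(t,x)-\abs{x}\to 0$ as $\abs{x}\to\infty$. The same vanishing for $\Var(h(t,x))-\abs{x}$ is exactly the content of \eqref{auxh0}, proved in the Appendix. Therefore $v(t,x)=b(t)$ tends to $0$ at infinity, so $b(t)=0$ and \eqref{hS} follows.

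The main technical obstacle is not the affine-matching argument itself but securing the hypothesis \eqref{auxh0}: establishing that $\Var(h(t,x))-\abs{x}\to 0$ as $\abs{x}\to\infty$ requires the analysis of the stochastic heat equation carried out in the Appendix, and this is the only step that does not follow from the weakly asymmetric approximation and the results already in hand.
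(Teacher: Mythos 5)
Your proof is correct and follows essentially the same route as the paper's: you match the distributional second derivative of both sides (using \eqref{td}), invoke $x\mapsto -x$ symmetry, and use the decay $\Var(h(t,x))-\abs{x}\to 0$ from the Appendix to eliminate the residual affine function. The only difference is organizational — the paper packages the reconstruction step as a standalone Lemma (\ref{v''lemma}) proved by mollification, whereas you verify $\tfrac12\partial_x^2 g=S(t,dx)$ directly by testing against $\psi\in C_c^\infty$ and applying Fubini, which is a modest streamlining of the same argument.
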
  

This proposition implies the remaining parts of Theorem \ref{theoremh}
because  symmetry implies
\[  \Var(h(t,x)) =\abs{x}  +2 \int_{\abs{x}}^\infty  (z-\abs{x})\,S(t,dz).
\]
 From this follow $\Var(h(t,x)) -\abs{x}\ge 0$ and 
 the identities 
\[   \Var(h(t,0)) =  \int_\R  \abs{x}\,S(t,dx)  \]
and 
for $1<m<3$  
  \[m(m-1) \int |x|^{m-2} [\Var (h(t,x))-|x|]dx = 2\int |x|^m S(t,dx).  \]
Then we can apply the bounds from \eqref{bdsonu}. 

\begin{proof}[Proof of Proposition \ref{varh-prop-9}]  First a lemma. 

\begin{lemma}  Suppose $v$ is a continuous, symmetric function on $\R$
and in the sense of distributions  $v''=2\mu$ for a symmetric probability
measure $\mu$ on $\R$.  Assume that $\int_\R\abs{x}\,d\mu<\infty$.  Then 
\be v(x)= \abs{x} + v(0)-\int_\R \abs{z}\,\mu(dz) + 2\int_{\abs{x}}^\infty (z-\abs{x}) \mu(dz).
\label{v''identity}\ee
\label{v''lemma}\end{lemma}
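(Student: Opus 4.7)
The plan is to identify $v$ with a canonical potential for $\mu$ up to an additive constant, and then rewrite that potential in the form claimed on the right-hand side of \eqref{v''identity}.

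First, introduce the auxiliary function
\[
u_0(x) = \int_\R \abs{x-z}\,\mu(dz).
\]
Since $\int\abs{z}\,d\mu<\infty$, $u_0$ is a well-defined continuous (in fact $1$-Lipschitz) function on $\R$. Since $\mu$ is symmetric, the change of variable $z\mapsto -z$ shows $u_0(-x)=u_0(x)$. Because $\tfrac{d^2}{dx^2}\abs{x-z}=2\delta_z$ in the sense of distributions, Fubini against test functions gives
\[
u_0'' = 2\mu
\]
distributionally.

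Next, the function $v-u_0$ is continuous on $\R$, symmetric, and satisfies $(v-u_0)''=0$ in the sense of distributions. Hence $v-u_0$ is affine, and by its symmetry it is a constant. Evaluating at $x=0$ gives
\[
v(x)-u_0(x) = v(0)-u_0(0) = v(0) - \int_\R\abs{z}\,\mu(dz).
\]

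It remains to rewrite $u_0$ in the form appearing in \eqref{v''identity}. Here I use the identity $\abs{x-z}+\abs{x+z}=2\max(\abs{x},\abs{z})$ together with the symmetry of $\mu$:
\[
u_0(x)=\tfrac12\int_\R\bigl(\abs{x-z}+\abs{x+z}\bigr)\,\mu(dz)=\int_\R\max(\abs{x},\abs{z})\,\mu(dz).
\]
Splitting according to whether $\abs{z}\le\abs{x}$ or $\abs{z}>\abs{x}$ and using $\mu(\R)=1$,
\[
u_0(x)=\abs{x}+\int_{\abs{z}>\abs{x}}(\abs{z}-\abs{x})\,\mu(dz)=\abs{x}+2\int_{\abs{x}}^{\infty}(z-\abs{x})\,\mu(dz),
\]
the last equality again by symmetry of $\mu$. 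Combining with the previous display yields \eqref{v''identity}.

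The main (minor) obstacle is the initial step: verifying that $u_0'' = 2\mu$ in the sense of distributions. This requires an application of Fubini to justify moving the second derivative inside the $\mu$-integral, which is legitimate because $\abs{x-z}$ grows linearly in $z$ and test functions have compact support, so the finiteness of $\int\abs{z}\,d\mu$ provides the necessary integrability. Everything else is algebraic manipulation using the symmetry of $\mu$.
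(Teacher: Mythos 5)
Your proof is correct, but it takes a genuinely different route from the paper's. The paper first proves the identity under the extra assumption that $\mu$ has a continuous density (so that $g(x)=\int_x^\infty(z-x)v''(z)\,dz$ is a classical antiderivative-of-antiderivative), determines the affine constants from symmetry and from evaluating at $x=0$, and then handles the general case by mollifying: it applies the smooth-case identity to $\phi_\delta*v$ and lets $\delta\searrow 0$. You instead introduce the logarithmic-type potential $u_0(x)=\int_\R|x-z|\,\mu(dz)$, verify $u_0''=2\mu$ directly in the distributional sense via Fubini (using $\frac{d^2}{dx^2}|x-z|=2\delta_z$ and the moment hypothesis), conclude $v-u_0$ is a symmetric affine function, hence a constant, and then rewrite $u_0$ via the elementary identity $|x-z|+|x+z|=2\max(|x|,|z|)$. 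Your argument has the advantage of avoiding the approximate-identity limit entirely: the potential $u_0$ is already a distributional solution for a general symmetric $\mu$ with finite first moment, and the only analytic input is the Fubini justification, which you correctly note follows from compact support of test functions plus $\int|z|\,d\mu<\infty$. The paper's route is a bit more pedestrian but makes the smooth heuristic transparent before approximating. Both are complete proofs; yours is arguably shorter and more self-contained.
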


\begin{proof}  Suppose first that $v''/2$ is a continuous probability density. Then
\[  g(x)=\int_x^\infty (z-x) v''(z)\,dz \]
satisfies $g''=v''$ and thereby 
\[  v(x)= ax+b+ \int_x^\infty (z-x) v''(z)\,dz \]
for constants $a$, $b$.  From symmetry deduce $a=1$.  Taking $x=0$ 
identifies  $b=v(0)-\tfrac12 \int_\infty^\infty \abs{z}v''(z)\,dz.$
Now \eqref{v''identity} holds for smooth $v$.  Take a symmetric compactly
supported smooth approximate identity $\{\phi_\delta\}_{\delta>0}$, apply 
 \eqref{v''identity} to $\phi_\delta*v$ and let $\delta\searrow 0$. 
\end{proof}

Continuing the proof of Proposition \ref{varh-prop-9}, 
apply \eqref{v''identity} to $v(x)=\Var(h(t,x))$ to get  
 \be\begin{aligned}
\Var(h(t,x))&=\abs{x}+\biggl( \Var(h(t,0)) - \int_{\R} \abs{z}\,S(t,dz)\biggr) \\
&\qquad +2 \int_{\abs{x}}^\infty  (z-\abs{x})\,S(t,dz).  
\end{aligned} \label{v-ident}\ee From the Appendix we get   $\Var(h(t,x))-\abs{x}\to 0$.
Combining this with above gives  first 
\be   \Var(h(t,0)) = \int_{\R} \abs{z}\,S(t,dz) \label{v0-ident}\ee
and then  
\be\begin{aligned}
\Var(h(t,x))&=\abs{x}  + 2\int_{\abs{x}}^\infty  (z-\abs{x})\,S(t,dz)\\ 
&={x}  +2 \int_{{x}}^\infty  (z-{x})\,S(t,dz).  \qedhere\\ \end{aligned} \label{v-ident2}\ee 
\end{proof}

\section{Second class particle estimate}
\label{second}

In this section we prove the key estimate for the moment of a second class
particle.  The context is the asymmetric simple exclusion process (ASEP) jumping to the right with rate $p=1/2$ and
to the left with rate $q=1/2+\ep^{1/2}$. Throughout $\ep\in(0,1/4)$, with
the real interest being the limit $\ep\searrow 0$.
 Probabilities associated to this process
are denoted by $P_\ep^\rho$ when the process is stationary with Bernoulli $\rho$
occupations.  The macroscopic flux function  is 
$H_\ep(\rho)=-\ep^{1/2}\rho(1-\rho)$ and the characteristic speed 
$V^\rho_\ep  = H_\ep'(\rho)=    -\ep^{1/2}(1-2\rho)$.

Let $\Pv^\rho_\ep$ denote the probability measure of the basic coupling of two processes
$\zeta^-(t)\le\zeta(t)$ with this initial configuration:  $\zeta^-(0,0)=0<1=\zeta(0,0)$, and
for  $x\ne 0$, $\zeta^-(0,x)=\zeta(0,x)$ have mean $\rho$ and they are independent 
across the sites $x$.
 Let ${\sc}(t)$ denote the position of the discrepancy between $\zeta^-(t)$ and $\zeta(t)$, 
in other words, the position of the   second class  particle 
started at the origin.   The mean speed of the second class  particle is
the characteristic speed (Corollary 2.5  in \cite{bala-sepp-07JSP} or 
Theorem 2.1 in \cite{bala-sepp-08ALEA}):
\be  \Ev^\rho_\ep  [{\sc}(t)] = V^\rho_\ep t.   \label{Qspeed}\ee

 From (\ref{Ssc}), Theorem \ref{Qmomthma} is equivalent to the case $\rho=1/2$ of the following theorem. 

\begin{theorem}  With the same constants as in Theorem \ref{theoremh}, for all $0<\ep<1/4$,  $1\le m<3$, and $t\ge c_0\ep^{-2}$, 
\be C_1\ep^{m/3}t^{2m/3} \le  \Ev^\rho_\ep  \bigl[\,\abs{\sc(t)-V^\rho_\eps t}^m \,\bigr]\le C_2\ep^{m/3}t^{2m/3}.   
\label{Qmom}\ee\label{Qmomthm}\end{theorem}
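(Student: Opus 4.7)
The plan is to adapt the coupling and current-variance framework of Bal\'azs--Sepp\"al\"ainen \cite{bala-sepp-07JSP, bala-sepp-08ALEA} to the weakly asymmetric setting, tracking the dependence on $\ep$ carefully through all the estimates. The identity \eqref{Ssc}, together with Proposition \ref{prop1}, already reduces the $S_\ep$-moment bounds of Theorem \ref{Qmomthma} to moment bounds on $\sc(t)-V^\rho_\ep t$, so it suffices to prove the displayed inequality directly.

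\textbf{Step 1 (variance-current identity).} The first task is to connect $\Ev^\rho_\ep|\sc(t)-V^\rho_\ep t|$ to the variance of the current along the characteristic. A standard computation with the basic coupling yields an identity of the form
\[
\chi(\rho)\,\Ev^\rho_\ep\bigl|\sc(t) - V^\rho_\ep t\bigr| \,=\, \Var^\rho_\ep\bigl(N(t, \lfloor V^\rho_\ep t\rfloor) - \chi(\rho)\cdot(\text{initial column})\bigr),
\]
where $\chi(\rho)=\rho(1-\rho)$. Moments of order $m\in(1,3)$ are accessed by an auxiliary coupling of two stationary processes at densities $\rho$ and $\rho+h$, using strict concavity of the flux $H_\ep(\rho)=-\ep^{1/2}\rho(1-\rho)$; it is precisely this $\ep^{1/2}$ factor in $H_\ep$ that will eventually produce the $\ep^{m/3}$ prefactor.

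\textbf{Step 2 (upper bound).} With these identities in hand, the upper bound proceeds by a microscopic concavity / subadditivity loop. One bounds $\Var^\rho_\ep(N)$ in terms of $\Ev|\sc|$ via a second class particle at a slightly shifted density, and then uses the converse of Step~1 to close a self-bounding inequality. Balancing the two sides forces $\Ev^\rho_\ep|\sc(t)|^m \le C_2 \ep^{m/3} t^{2m/3}$. The $\ep$-dependence is transparent under the natural WASEP rescaling $(t,x)\mapsto (\ep^{-2}t,\ep^{-1}x)$: in rescaled coordinates the bound is the standard $t^{2m/3}$ of \cite{bala-sepp-08ALEA}, and reverting the scaling introduces the factor $\ep^{m/3}$. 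The threshold $t\ge c_0\ep^{-2}$ is exactly the scale at which the KPZ-type anomaly dominates over the transient diffusive regime.

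\textbf{Step 3 (lower bound).} For the lower bound I would use that the initial product Bernoulli$(\rho)$ measure has variance $\chi(\rho)$ per site, so the centered current across a macroscopic interval has at least Gaussian fluctuations of a known size. Propagating this lower bound via the identity of Step~1 forces $\Ev^\rho_\ep|\sc(t)-V^\rho_\ep t|\ge C_1 \ep^{1/3} t^{2/3}$ once $t\ge c_0\ep^{-2}$: any smaller value would be inconsistent with the variance of the current. The higher moments $m>1$ then follow from this $m=1$ lower bound combined with the tail estimate provided by the Step~2 upper bound via H\"older's inequality.

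The main obstacle will be Step~2. The microscopic concavity argument of \cite{bala-sepp-08ALEA} was designed for TASEP ($q=0$, $p=1$); here one must verify that the monotone couplings and attractiveness used there continue to produce the required inequalities uniformly for $q=1/2+\ep^{1/2}$, and that the constants depend on $\ep$ only through the explicit $\ep^{m/3}$ prefactor, with no hidden logarithmic or polynomial correction. Ensuring $c_0$ and $C_i$ are genuinely $\ep$-independent, and in particular that the bootstrap inequality does not degenerate as $\ep\searrow 0$, is the delicate point that will require the most care.
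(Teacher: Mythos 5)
Your Step 1 correctly identifies the engine: the current--second-class-particle identity $\Var^\rho_\ep[J_x(t)] = \rho(1-\rho)\,\Ev^\rho_\ep|\sc(t)-x|$ and couplings of ASEPs at densities $\rho$ and $\lambda<\rho$. Your Step 2 also has the right spirit: the paper's upper bound is indeed a self-bounding loop, where a tail probability $\Pv(|\sc(t)-V^\rho_\ep t|\ge u)$ is controlled in terms of $\Ev|\sc(t)-V^\rho_\ep t|$ itself (via the Chebyshev-type estimate on a current difference and the bound $\Pv(m(t)\le -k)\le e^{-\ep^{1/2}k}$ for the random label), and then one integrates and solves. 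But the claim that ``in rescaled coordinates the bound is the standard $t^{2m/3}$ and reverting the scaling introduces $\ep^{m/3}$'' is a heuristic, not a proof: there is no literal conjugation of the process that lands you on a fixed-asymmetry ASEP. The paper does not rescale; it carries $\ep$ explicitly through the choice of perturbed density $\lambda=\rho-\tfrac1{10}u\ep^{-1/2}t^{-1}$ and label cutoff $k\approx\tfrac1{20}u^2\ep^{-1/2}t^{-1}$, and it is the flux curvature $H_\ep''(\rho)=2\ep^{1/2}$ appearing in line \eqref{line1.8} that ultimately produces $\ep^{m/3}$. You would need to make that computation, or an equivalent one, and verify that no extra $\ep$-dependence leaks in. (Incidentally, the microscopic-concavity framework of \cite{bala-sepp-08ALEA} handles general $p<q$ ASEP, not just TASEP, so your worry at the end is not the real obstacle.)

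The genuine gap is Step 3. The assertion that the Bernoulli$(\rho)$ initial condition forces ``at least Gaussian fluctuations of a known size'' on the current, and that ``propagating this lower bound via the identity of Step 1 forces'' $\Ev|\sc(t)-V^\rho_\ep t|\ge C_1\ep^{1/3}t^{2/3}$, does not describe a working mechanism. The variance of the mass in an initial macroscopic interval does not directly lower-bound $\Var^\rho_\ep[J_{\fl{V^\rho_\ep t}}(t)]$ --- that is precisely the superdiffusive assertion one is trying to prove, and it is strictly stronger than diffusive behavior. The paper's lower bound runs as follows: pick a shifted density $\lambda=\rho-bt^{-1/3}\ep^{-1/6}$, plant a single second class particle at $n=\fl{V^\rho t}-\fl{V^\lambda t}+u$ in a density-$\lambda$ background but with a $\rho$-density block inserted, and use the \emph{already-proved upper bound} to guarantee $\Pv\{\sc^{(n)}(t)\le\fl{V^\rho t}\}\le\tfrac12$; that, together with the label bound and current comparisons, yields a uniform lower bound on the probability of an event $\mathcal A$ about a current being large. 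The final step is a Radon--Nikodym change of measure between the perturbed initial law $\gamma$ and the stationary $\nu^\rho$ (whose density-squared is bounded by a constant $c_2(\rho)$ independent of $t,\ep$ precisely because $t\ge c_0\ep^{-2}$), followed by Cauchy--Schwarz and Chebyshev, which transfers the lower bound on $\Pv(\mathcal A)$ to a lower bound on $\Var^\rho$ of a stationary current and hence, via \eqref{JQ}, to $\Ev^\rho_\ep|\sc(t)-V^\rho_\ep t|$. Your sketch contains none of this --- neither the perturbed-density planting nor the change of measure --- and without them there is no route from ``initial variance is $\chi(\rho)$ per site'' to the conclusion. Finally, for $m>1$ the paper simply invokes Jensen on the $m=1$ lower bound; your proposed H\"older argument is unnecessarily roundabout, though not wrong.
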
 

The remainder of the section proves Theorem \ref{Qmomthm}, with separate subsections
for the upper and lower bound. 




\subsection{Proof of the upper bound for Theorem \ref{Qmomthm}}  


\begin{lemma}   Let $B\in(0,\infty)$.  
 There exists 
$C\in(0,\infty)$  and $c_1(B)\in(0,\infty)$
    such that the following bounds hold for
all   $0< \rho< 1$, $u\ge 1$, $0<\ep<1/4$, and  
  $t\ge c_1(B)\ep^{-1/2}$.   
  
  {\rm (i)}  For $B\ep^{1/3} t^{2/3}\le  u\le 20t/3$, \be
\Pv^\rho_\ep(|{\sc}(t)- V^\rho_\ep t|\ge u) 
 \le 
 C\ep t^2u^{-4}\Ev^\rho_\ep\lvert {\sc}(t)-V^\rho_\ep t\rvert+ C\ep t^2u^{-3} +e^{-u^2/Ct}.
\label{eq:UBge1/2} \ee 

{\rm (ii)} For $u\ge 20 t/3$,
 \be
\Pv^\rho_\ep(|{\sc}(t)- V^\rho_\ep t|\ge u) 
 \le 
e^{-u/C}. 
\label{eq:UBge2/2}\ee \label{lm:UB1}
\end{lemma}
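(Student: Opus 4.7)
My plan splits the analysis by the size of $u$.

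For part (ii), when $u \ge 20t/3$, I would use a crude Poisson bound. Since the sites near the second class particle fire their exclusion clocks at total rate $p+q = 1+\ep^{1/2}<2$, the displacement $|\sc(t)|$ is stochastically dominated by a Poisson random variable with mean $O(t)$. A Chernoff bound yields $\Pv^\rho_\ep(|\sc(t)| \ge u) \le e^{-u/C}$ once $u \ge Ct$, and the cutoff $u \ge 20t/3$ comfortably absorbs both $Ct$ and the mean shift $|V^\rho_\ep t| \le \ep^{1/2}t$.

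For part (i), the core is a coupling / ``hill'' argument in the spirit of Bal\'azs--Sepp\"al\"ainen. I would pair the stationary ASEP $\zeta$ with a perturbed process $\zeta^L$ obtained by adding a hill of particles on sites $\{-L+1,\ldots,0\}$, coupled through the basic coupling. Each added particle evolves as a second class particle, and the number that reaches past the moving observer $V^\rho_\ep t + u$ equals, by conservation of mass, a current difference $N^{\zeta^L}(t,\cdot)-N^{\zeta}(t,\cdot)$ (modulo an initial boundary term). After dividing by $L$ and taking expectations one arrives at
\[
\Pv^\rho_\ep(\sc(t) - V^\rho_\ep t \ge u)
 \le L^{-1}\Ev\bigl[ N^{\zeta^L}(t,\lfloor V^\rho_\ep t + u\rfloor) - N^{\zeta}(t,\lfloor V^\rho_\ep t + u\rfloor)\bigr] + O(L^{-1}).
\]
The right-hand side is split into a deterministic mean and a fluctuation. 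A Taylor expansion of the macroscopic flux $H_\ep(\rho) = -\ep^{1/2}\rho(1-\rho)$, noting that its first derivative matches the comoving observer ($V^\rho_\ep = H_\ep'(\rho)$) and $H''_\ep(\rho) = 2\ep^{1/2}$, produces a deterministic offset of order $\ep^{1/2}L^{-1}t$ that one absorbs into $u$.

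The three terms in the stated bound then arise as follows: the Gaussian tail $e^{-u^2/Ct}$ is a sub-Gaussian concentration of the current and of the Bernoulli initial occupations at the diffusive scale (variance $\sim t$); the term $C\ep t^2 u^{-3}$ comes from a third-order Markov bound on the current fluctuations, whose scale $\ep t^2$ reflects WASEP scaling; and the self-referential term $C\ep t^2 u^{-4}\Ev^\rho_\ep|\sc(t) - V^\rho_\ep t|$ is produced by the Sepp\"al\"ainen-type microscopic-concavity identity $\Vv(N(t,x)) = \rho(1-\rho)\,\Ev^\rho_\ep|\sc(t) - V^\rho_\ep t|$ (up to lower-order terms), inserted into a fourth-order Markov bound on the current. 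Optimizing $L\asymp u$ closes the argument; the upper cutoff $u \le 20t/3$ keeps the hill perturbation admissible (densities in $[0,1]$) and the Taylor remainder genuinely subleading.

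The main obstacle is the microscopic-concavity identity that rewrites the current variance in terms of $\Ev^\rho_\ep|\sc(t) - V^\rho_\ep t|$. Without it, a plain Chebyshev estimate would deliver only a $u^{-2}$-type bound with the wrong exponent. It is precisely the self-referential form of the present estimate that, when integrated against $u^{m-1}du$ and combined with the lower bound of Theorem \ref{Qmomthm}, permits the bootstrap to the KPZ moment scaling $\Ev^\rho_\ep|\sc(t) - V^\rho_\ep t|^m \sim \ep^{m/3} t^{2m/3}$.
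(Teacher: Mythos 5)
Your part (ii) is essentially the paper's argument: stochastic domination of $\sc(t)$ by a free asymmetric random walk $Z_t$ with the same rates, followed by a Chernoff/exponential-moment bound. The paper actually obtains the sub-Gaussian middle range and the linear large-deviation range from the same exponential moment calculation for $Z_t$; your Poisson dominance is coarser but fine for $u\ge 20t/3$.

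Your part (i) captures the right family of ideas (perturb the stationary system, use the ordering of second-class particles, translate to a current inequality, and close with the Bal\'azs--Sepp\"al\"ainen identity $\Var^\rho_\ep[J_x(t)]=\rho(1-\rho)\Ev^\rho_\ep|\sc(t)-x|$), but there are two genuine gaps in how the terms are produced. First, you have no control of the label of $\sc(t)$ among the family of perturbation second-class particles. In ASEP, unlike TASEP, basic coupling does not keep $\sc(t)$ as a fixed (say, leftmost) member of the $\zeta-\eta$ discrepancies; the label $m(t)$ does an asymmetric walk on the labels, and one needs the separate estimate $\Pv(m(t)\le -k)\le e^{-\ep^{1/2}k}$ (Lemma 5.2 of \cite{bala-sepp-08ALEA}). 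This is precisely what produces the $e^{-u^2/Ct}$ term once $k\asymp u^2\ep^{-1/2}t^{-1}$; it is not a sub-Gaussian tail of the current or of the initial Bernoulli mass. Without the label-process estimate, the step from ``$\sc(t)\le V_\ep^\rho t-u$'' to a statement about the current of second-class particles is unjustified. Second, the $u^{-4}$ factor does not come from a fourth-order Markov inequality: no fourth-moment control of the current is available (or used). The paper applies Chebyshev to the \emph{second} moment of the current difference, after choosing the auxiliary density $\lambda=\rho-\tfrac1{10}u\ep^{-1/2}t^{-1}$ and the label threshold $k\asymp u^2\ep^{-1/2}t^{-1}$; the quadratic-in-$u$ mean shift turns Chebyshev's $(\text{deviation})^{-2}$ into $u^{-4}\ep t^2$, and inserting the variance identity gives the self-referential term, with the $C\ep t^2 u^{-3}$ piece coming from the residual $+Cu$ in the variance bound. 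As written, your ``fourth-order Markov'' step would fail. These two issues (label control, and how $u^{-4}$ arises) are the heart of the proof and need to be repaired.
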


\begin{proof}  First we obtain the bounds for $\Pv^\rho_\ep({\sc}(t)\le V^\rho_\ep t - u)$.   By an adjustment
of  the constant $C$ we can 
 assume   that  
$u$ is a positive integer.  
Fix a density
 $0< \rho <1$ and let  $\lambda\in(0,\rho)$. 
Consider a basic coupling of three ASEP's
$\zeta\ge\zeta^-\ge \eta$ with this initial configuration: 
 
(a) 
Initially $\{\zeta(0,x):x\ne 0\}$ are i.i.d.\ Bernoulli($\rho$) 
 and $\zeta(0,0)=1$.

(b) 
Initially $\zeta^-(0,x)=\zeta(0,x)-\delta_0(x)$.   

(c) Initially
  $\{\eta(0,x):x\ne 0\}$ 
are i.i.d.\ Bernoulli($\lambda$) and $\eta(0,0)=0$. 
 The   coupling of the 
initial occupations is 
such that $\zeta(0,x)\ge\eta(0,x)$ for all $x\ne 0$.

Recall that  basic coupling means that  the processes
share common  Poisson clocks. 

Let $\sc(t)$ be the position of the single second class 
particle between $\zeta(t)$ and $\zeta^-(t)$, initially
at the origin.  Let $\{X_i(t):i\in\Z\}$ be the positions
of the $\zeta-\eta$ second class particles,  
labeled so that initially 
\[
\dotsm<X_{-2}(0)<X_{-1}(0)<X_0(0)=0<X_1(0)<X_2(0)<\dotsm 
\]
Let these second class particles preserve their labels in the
dynamics and stay ordered.  Thus the $\zeta(t)$ configuration
consists of first class particles (the $\eta(t)$ process)
and second class particles (the $X_j(t)$'s). 
Let $\Pv_\ep$ denote the joint probability distribution of these
coupled processes. The marginal distribution
of $(\zeta,\zeta^-,{\sc})$ under $\Pv_\ep$ is the same as   under $\Pv_\ep^\rho$. 

For $x\in\Z$,  
 $J^\zeta_x(t)$ is  the net left-to-right particle current in the 
$\zeta$ process across the space-time line segment from  point  
$(1/2,0)$ to $(x+1/2,t)$.  Similarly 
 $J^\eta_x(t)$  in the $\eta$ process, 
and  $J^{\zeta-\eta}_x(t)$ is  the net current of
second class particles.  
Current 
in the $\zeta$ process is a sum of  the first class particle
current and the second class particle current:  
$
J^\zeta_x(t)= J^\eta_x(t)+J^{\zeta-\eta}_x(t). $

Basic coupling preserves
 $\sc(t)\in\{X_j(t)\}$.  Define the label $m(t)$ by
$\sc(t)=X_{m(t)}(t)$ with initial value $m(0)=0$.  The label $m(t)$ performs a walk
on the labels of the $\{X_j\}$ with rates $p$ to the left and $q$ to the right, 
but jumps permitted only when  $X_j$ particles are  adjacent. 
Through a  comparison with a reversible walk, 
Lemma 5.2 in \cite{bala-sepp-08ALEA} gives the bound 
\be
\Pv_\ep(m(t)\le -k) \le \exp\{-\ep^{1/2} k\} \quad\text{for all $t\ge 0$ and $k\ge 0$.}
\label{mQ}\ee
 
 To get the first step of the estimation,  note that  if $\sc(t)\le V^\rho_\ep t -u$ and 
$m(t)>-k$, then $X_{-k}(t)<\fl{V^\rho_\ep t}-u$.   Then among the  $\zeta-\eta$
particles only $X_{-k+1},\dotsc, X_{0}$ could have crossed from the left side of $1/2$
to the right side of $ \fl{V^\rho_\ep t} -u+1/2$ during time $(0,t]$.  Thereby 
  $J^{\zeta-\eta}_{\fl{V^\rho_\ep t}-u}(t)\le k$, and by an appeal to  \eqref{mQ} we have 
 \begin{align}
&\Pv_\ep(\sc(t)\le V^\rho_\ep t -u\} \le \Pv_\ep(m(t)\le -k)
+ \Pv_\ep(  J^\zeta_{\fl{V^\rho_\ep t}-u}(t) \; - \; J^\eta_{\fl{V^\rho_\ep t}-u}(t)
\le k) \nn \\
&\qquad\le  \exp\{-\ep^{1/2} k\}  
+ \Pv_\ep( J^\zeta_{\fl{V^\rho_\ep t}-u}(t) \; - \; J^\eta_{\fl{V^\rho_\ep t}-u}(t)
\le k).
\label{line2}\end{align}

We work on the second probability on line
\eqref{line2}.   In  stationary density $\rho$ 
$E^\rho_\ep[J_x(t)]=H_\ep(\rho) t-\rho x$ for $x\in\Z$. 
 Process $\zeta$ can be coupled
with a stationary density $\rho$ process $\zeta^{(\rho)}$ 
with at most one discrepancy.  In this coupling
$\lvert J^\zeta_{x}(t)-J^{\zeta^{(\rho)}}_{x}(t)\rvert\le 1$ and so  
 we can use expectations of  
stationary processes at the expense of small errors.  
  Let
$c_1$ below be a constant that absorbs the errors
from using means of stationary processes  
 and from ignoring  integer parts. It satisfies 
$\abs{c_1}\le 3$.  
\begin{align}
\Ev_\ep J^\zeta_{\fl{V^\rho_\ep t}-u}(t) - \Ev_\ep J^\eta_{\fl{V^\rho_\ep t}-u}(t)
&= 
tH_\ep(\rho)-(tH_\ep'(\rho)-u)\rho -tH_\ep(\lambda)\nn\\
&\qquad \qquad+(tH_\ep'(\rho)-u)\lambda
+c_1\nn\\[3pt]
&= -\tfrac12 tH_\ep''(\rho)(\rho-\lambda)^2+u(\rho-\lambda)+c_1 \nn\\
&=-t\ep^{1/2}(\rho-\lambda)^2+u(\rho-\lambda)+c_1.  \label{line1.8}
\end{align}

\noindent {\it Case 1.} $B\ep^{1/3}t^{2/3} \le u\le 5\rho\ep^{1/2} t$.
Note that   $20t/3>5\rho\ep^{1/2} t$.
Choose
\be
\lambda=\rho-\tfrac1{10} {u}\ep^{-1/2} t^{-1}
\quad\text{and}\quad
k=\left\lfloor\tfrac1{20} {u^2}\ep^{-1/2} t^{-1}\right\rfloor-3.
\label{eq:lakchoice}\ee
By  assuming $t\ge C(B)\ep^{-1/2}$ we guarantee that
$u\ge 1$ and 
\[ \tfrac1{40} {u^2}\ep^{-1/2} t^{-1} \ge \tfrac1{40}B^2\ep^{1/6} t^{1/3}
\ge 4. \]
Then 
\be
k\ge \tfrac1{40}{u^2}\ep^{-1/2} t^{-1}\ge 4. 
\label{eq:kprop}\ee
In the next inequality below
 the $-3$ in the definition \eqref{eq:lakchoice}
of  $k$ absorbs $c_1$ from line \eqref{line1.8}.
Let   $\overline  X=X-EX$ denote a centered random
variable.  Continuing with the   probability
 from line \eqref{line2}:
\begin{align}
&\Pv_\ep\{  J^\zeta_{\fl{V^\rho_\ep t}-u}(t) \; - \; J^\eta_{\fl{V^\rho_\ep t}-u}(t)
\le k\}\nn\\
&\le \Pv_\ep\Bigl\{  \overline  J^\zeta_{\fl{V^\rho_\ep t}-u}(t) \; - \; 
\overline  J^\eta_{\fl{V^\rho_\ep t}-u}(t) \le  
-\,\tfrac1{25}{u^2}{\ep^{-1/2}t^{-1}} \Bigr\}\nn\\
&\le C\ep t^2u^{-4} 
\Vv_\ep\Bigl[ J^\zeta_{\fl{V^\rho_\ep t}-u}(t) \; - 
\; J^\eta_{\fl{V^\rho_\ep t}-u}(t)\Bigr]
\nn\\
&\le C\ep t^2u^{-4} 
\Bigl(\Vv_\ep\bigl[ J^\zeta_{\fl{V^\rho_\ep t}-u}(t)\bigr] 
\; + \; \Vv_\ep\bigl[J^\eta_{\fl{V^\rho_\ep t}-u}(t)\bigr]\,\Bigr).\label{line6}
\end{align} 
  $C$ is a constant that can change from line to
line but  is independent of all  parameters. 

We develop bounds on  the variances above, 
first for $J^\zeta$. Utilize  the coupling with a  stationary density $\rho$ 
 process.  Then apply the basic identity 
 \be \Var^\rho_\ep\bigl[ J_x(t)\bigr]= \rho(1-\rho) 
\Ev^\rho_\ep\bigl\lvert {\sc}(t)-x\rvert 
\label{JQ}\ee
that links the variance of the current with the second class  particle. 
(This is proved in Corollary 2.4  in \cite{bala-sepp-07JSP} and  in 
Theorem 2.1 in \cite{bala-sepp-08ALEA}.)
We find 
 \begin{align}
\Vv_\ep\bigl[ J^\zeta_{\fl{V^\rho_\ep t}-u}(t)\bigr]
&\le 2\Var^\rho_\ep\bigl[ J_{\fl{V^\rho_\ep t}-u}(t)\bigr]+2 \nn\\
&=
2\rho(1-\rho) 
\Ev_\ep\bigl\lvert {\sc}(t)-\fl{V^\rho_\ep t}+u\bigr\rvert + 2\nn\\
& \le  \Ev^\rho_\ep\lvert {\sc}(t)-V^\rho_\ep t\rvert + 4u.   \label{eq:aux7}
\end{align}  
For the second variance on line \eqref{line6} we begin in the same way:
\begin{align}
\Vv_\ep\bigl[ J^\eta_{\fl{V^\rho_\ep t}-u}(t)\bigr]
&\le 2\Var^\lambda_\ep\bigl[ J_{\fl{V^\rho_\ep t}-u}(t)\bigr]+2 \nn\\
&\le 
2\lambda(1-\lambda)\Ev_\ep^\lambda\bigl\lvert {\sc}^\lambda(t)-\fl{V^\rho_\ep t}+u\bigr\rvert + 2\nn\\
&\le
\Ev_\ep^\lambda\lvert {\sc}^\lambda(t)-V^\rho_\ep t\,\rvert + 4u. \label{line6.7}\end{align}
Here we switched to a stationary density $\lambda$ process
and introduced a second class  particle ${\sc}^\lambda$ in this process.  
In order to get the same bound  as on line \eqref{eq:aux7}  we wish to
switch from ${\sc}^\lambda(t)$ to the second class particle ${\sc}(t)$ in the density-$\rho$
process.  To this end we  utilize
a coupling developed in Section 3 of \cite{bala-sepp-08ALEA}. 
Because the density $\rho$ process has higher particle density than  the 
density $\lambda$ process, the second class  particle in density $\lambda$ moves
on average faster in the direction of the drift.   
Theorem 3.1 of \cite{bala-sepp-08ALEA} allows us to couple ${\sc}^\lambda$ 
and ${\sc}$ so that  ${\sc}(t)\ge {\sc}^\lambda(t)$ with probability 1.  
 Thus continuing from line \eqref{line6.7}, \begin{align}
\Vv_\ep\bigl[ J^\eta_{\fl{V^\rho_\ep t}-u}(t)\bigr]
& \le
\Ev_\ep\bigl[{\sc}(t)-{\sc}^\lambda(t)\bigr] +
\Ev^\rho_\ep\lvert \sc(t)-V^\rho_\ep t\rvert + 4u
\label{eq:aux9}
\end{align}  
Now $\Ev_\ep\bigl[{\sc}(t)-{\sc}^\lambda(t)\bigr]= (V^\rho_\ep  -V^\lambda_\ep) t=  2\ep^{1/2} t(\rho-\lambda)$ and from the choice \eqref{eq:lakchoice} of $\lambda$, $2\ep^{1/2} t(\rho-\lambda)\le u$, hence
\begin{align}
\Vv_\ep\bigl[ J^\eta_{\fl{V^\rho_\ep t}-u}(t)\bigr]
& \le \Ev^\rho_\ep\lvert \sc(t)-V^\rho_\ep t\rvert + 5u. 
\label{eq:aux9.1}
\end{align}  
Insert bounds \eqref{eq:aux7} and \eqref{eq:aux9}
into \eqref{line6} to get
\begin{align}
\Pv_\ep ( J^\zeta_{\fl{V^\rho_\ep t}-u}(t) \; - \; J^\eta_{\fl{V^\rho_\ep t}-u}(t)
\le k)
\le C\ep 
t^2u^{-4}\Ev^\rho_\ep\lvert \sc(t)-V^\rho_\ep t\rvert+ C\eps t^2u^{-3} .
\label{eq:aux13} \end{align}
Insert \eqref{eq:kprop}  and \eqref{eq:aux13} into line \eqref{line2} 
to get 
\be 
\Pv_\ep ({\sc}(t)\le V^\rho_\ep t-u ) \le 
 C\ep t^2u^{-4}\Ev^\rho_\ep\lvert \sc(t)-V^\rho_\ep t\rvert+ C\eps t^2u^{-3} + e^{-u^2/40t} 
\label{eq:aux14}\ee
and we have verified  \eqref{eq:UBge1/2} for $\Pv^\rho_\ep({\sc}(t)\le V^\rho_\ep t - u)$
for {\it Case 1}.  

\medskip
\noindent{\it Case 2.}   $u\ge 5\rho\ep^{1/2} t$.
 Let $Z_t$ be  a nearest-neighbor random
walk   with rates $p=1/2$ to the right and 
$q=1/2+\ep^{1/2}$ to the left.   We have the stochastic domination 
$Z_t\le {\sc}(t)$ because no matter what the environment next to ${\sc}(t)$,
it has a weaker left drift than $Z_t$. 
Then, since $V^\rho_\ep=-\ep^{1/2}(1-2\rho)$,  $2\rho\ep^{1/2} t\le 2u/5$, 
and $\ep<1/4$, 
\be
\Pv_\ep^\rho\{{\sc}(t)\le V^\rho_\ep t- u\} 
\le P\{Z_t\le -\ep^{1/2} t- \tfrac35u\}. 
\label{eq:aux15.5a}
\ee
For $\alpha\in(0,1]$, utilizing 
$({e^\alpha+e^{-\alpha}})/2\le 1+\alpha^2$ and $e^{-\alpha}\ge 1-\alpha$,
\begin{align*}
\Ev_\ep[e^{-\alpha Z_t}]
&=\exp\Bigl( -(1+\ep^{1/2})t+t\frac{e^\alpha+e^{-\alpha}}2 (1+2\ep^{1/2})
-\ep^{1/2} t e^{-\alpha}\,\Bigr)\\
&\le \exp\bigl( \alpha^2t + (\alpha+2\alpha^2) \ep^{1/2} t \bigr).   
\end{align*} 
We can estimate $P\{Z_t\le -\ep^{1/2} t- \tfrac35u\}\le  \exp\bigl(-\tfrac35\alpha u+ 2\alpha^2t  \bigr)$ and choose $\alpha =1\wedge\frac{3u}{20t}$ to obtain
\be\begin{split}
\Pv_\ep^\rho\{{\sc}(t)\le V^\rho_\ep t- u\} 
& \le\begin{cases}  \exp(-\tfrac{9}{200}u^2t^{-1}) &u\le 20t/3\\
              \exp(-3u/10)  &u> 20t/3. \end{cases} 
\end{split} 
\label{eq:aux15.5}\ee
Combining   \eqref{eq:aux14}  and  
 \eqref{eq:aux15.5}  
gives Lemma \ref{lm:UB1} for $\Pv^\rho_\ep({\sc}(t)\le V^\rho_\ep t - u)$. 



%
  The corresponding upper tail bound $\Pv^\rho_\ep({\sc}(t)- V^\rho_\ep t \ge  u)$ is obtained
from that for  $\Pv^\rho_\ep({\sc}(t)-V^\rho_\ep t \le - u)$ by a particle-hole interchange followed by
a reflection of the lattice.  For details we refer to Lemma 5.3
in \cite{bala-sepp-08ALEA}.   This completes the proof of Lemma \ref{lm:UB1}.
\end{proof}

 \begin{proof}[Proof of the upper bound of Theorem \ref{Qmomthm}]
Integrate Lemma \ref{lm:UB1} to get the  bound \eqref{Qmom}  on the moments
of the second class particle.  First for $m=1$.  
\begin{align*}
& \Ev^\rho_\ep\lvert  {\sc}(t)-V^\rho_\ep t\rvert= \int_0^\infty 
\Pv_\ep^\rho\{\,\lvert {\sc}(t) -V^\rho_\ep t\rvert \ge u\}\,du\\
&\quad \le \tfrac13{C}{B^{-3}}\Ev^\rho_\ep\lvert  {\sc}(t)-V^\rho_\ep t\rvert + 
\Bigl(B+\tfrac12{C}{B^{-2} } \Bigr) \ep^{1/3}t^{2/3}  
\\&\quad\quad+C_1(B)t^{1/3}\ep^{-1/3} \exp\{-\tfrac1{C_1(B)} \ep^{2/3}t^{1/3}\}  
+ \tfrac{20}3 e^{-t/C}.
\end{align*}
$C_1(B)$ is a new constant that depends on $B$.
Set $B=C^{1/3}$  to obtain
\begin{align*}
\Ev^\rho_\ep\lvert \sc (t)-V^\rho_\ep t\rvert&\le \tfrac94 {C^{1/3}} \ep^{1/3}t^{2/3}  
+ C_1t^{1/3}\ep^{-1/3}\exp\{-\tfrac1{C_1} t^{1/3}\ep^{2/3} \}
+  \tfrac{20}3 e^{-t/C}.
\end{align*}
We can fix a constant $c_0$ large enough so that,
 for a new constant $C$, 
\be
\Ev^\rho_\ep\lvert  {\sc}(t)-V^\rho_\ep t\rvert\le C \ep^{1/3}t^{2/3}   
\quad\text{provided $t\ge c_0\ep^{-2} $.}  \label{eq:Psibd2}\ee
Restrict to $t$ that satisfy this requirement and  substitute \eqref{eq:Psibd2}
 into Lemma \ref{lm:UB1}. 
Then upon using $u\ge B\ep^{1/3}t^{2/3} $  and redefining
$C$ once more, we have 
   for $B\ep^{1/3}t^{2/3} \le  u\le 20t/3$:
\be \begin{aligned} 
\Pv_\ep^\rho (\,\lvert {\sc}(t) -V^\rho_\ep t\rvert \ge u) \le 
 C\ep t^2 u^{-3} + 2e^{-u^2/Ct}. 
 \end{aligned}\label{eq:aux18} \ee
  Now take $1<m<3$ 
and use   \eqref{eq:aux18} together with Lemma \ref{lm:UB1}
\begin{align}
&\Ev_\ep^\rho\lvert {\sc}(t)-V^\rho_\ep t\rvert^m 
=m \int_0^\infty \Pv_\ep^\rho\{\,\lvert {\sc}(t) -V^\rho_\ep t\rvert \ge u\}
u^{m-1}\,du \le 
B^m \ep^{m/3} t^{2m/3} \nn\\&  \;+\; Cm\ep t^2 \int_{B\ep^{1/3}t^{2/3} }^\infty
({u^{m-4}}+2m e^{-u^2/Ct} u^{m-1})\,du
\;+\;2m\int_{20t/3}^\infty e^{-u/C} u^{m-1}\,du.\nn
\end{align}
This  gives  
$
\Ev_\ep^\rho\lvert {\sc}(t)-V^\rho_\ep t\rvert^m  \le 
 \frac{C}{3-m} \ep^{m/3} t^{2m/3}   $
 provided $t\ge c_0\ep^{-2}$ for a large enough  $c_0$.    
 \end{proof}  
 
 \subsection{Proof of the lower bound of Theorem \ref{Qmomthm}}

By Jensen's inequality it suffices to prove the lower bound for $m=1$. 
 Let  $C_{UB}$ denote the constant in the upper bound statement that we just proved.  
 We can also assume $c_0\ge 1$.  
 Fix a constant $b>0$ and  set  
\begin{align*}
a_1=2C_{UB}+1  
\quad\text{and}\quad 
a_2=8 + \sqrt{32 b}+8 \sqrt{C_{UB}}.  \end{align*}
Increase  $b$ if necessary so that 
\be  b^2-2a_2\ge 1.  \label{ba_2}\ee

Fix a density $\rho\in(0,1)$ and define an auxiliary density 
$\lambda=\rho-bt^{-1/3}\ep^{-1/6}$. 
 Define  positive
integers 
\be
u=\fl{a_1t^{2/3}\ep^{1/3}} \quad\text{and}\quad 
n=\fl{V^\rho t}-\fl{V^\lambda t}+u.
\label{def:n}\ee
By taking $c_0$ large enough in the statement of Theorem \ref{Qmomthm} 
we can ensure that $\lambda\in(\rho/2,\rho)$ and $u\in\N$.

Construct a basic coupling of
three processes $\eta\le\eta^+\le \zeta$ with the following initial state:  

\medskip

(a) Initially $\eta$ has
i.i.d.~Bernoulli($\lambda$) occupations 
$\{\eta(0,x):x\ne n\}$ and $\eta(0,n)=0$. 

(b)  Initially $\eta^+(0,x)=\eta(0,x)+\delta_{n}(x)$ for all $x\in\Z$. 
${\sc}^{(n)}(t)$ is the location of the unique
discrepancy between $\eta(t)$ and $\eta^+(t)$. 

(c)  Initially $\zeta$ has independent 
occupation variables, coupled with $\eta(0)$
as follows: 

(c.1) $\zeta(0,x)=\eta(0,x)$ for $0\le x<n$ and 
  $\zeta(0,n)=1$.

 (c.2)   For   $x>n$ and $x<0$ variables $\zeta(0,x)$ 
are i.i.d.~Bernoulli($\rho$) and 
$\zeta(0,x)\ge\eta(0,x)$. 

Let $\Pv$ denote  the probability measure of the coupled processes. 
Label the $\zeta-\eta$ second class particles
as $\{X_m(t):m\in\Z\}$ 
so that initially 
\[
\dotsm< X_{-1}(0)<0<X_{0}(0)= n={\sc}^{(n)}(0)< X_{1}(0)<X_{2}(0)<\dotsm
\]
Let again the random label $m(t)$ satisfy 
 ${\sc}^{(n)}(t)=X_{m(t)}(t)$, with initial value  
  $m(0)=0$. 
In basic coupling
 $m(\cdot)$ jumps to the left
with rate $q$ and to the right with rate $p$, but only
when there is an $X$ particle  adjacent to $X_{m(\cdot)}$.
As in the proof of the upper bound,  
Lemma 5.2 in \cite{bala-sepp-08ALEA} gives the bound 
\be
\Pv\{m(t)\ge k\} \le  \exp\{-\ep^{1/2}k\}
 \quad\text{for all $t\ge 0$ and $k\ge 0$.}
\label{mQ2}\ee
By the upper bound already proved and by the choice of $a_1$,  
\be\begin{aligned}
& \Pv\{ {\sc}^{(n)}(t)\le \fl{V^\rho t}\}
=\Pv\{ {\sc}^{(n)}(t)\le n+\fl{V^\lambda t} -u\} \\
&\qquad \le  u^{-1}  \Ev\abs{ {\sc}^{(n)}(t)- n-\fl{V^\lambda t} } 
\le \frac{C_{UB}t^{2/3}\ep^{1/3}}{\fl{a_1t^{2/3}\ep^{1/3}}}\le \tfrac12. 
\end{aligned}\label{lb:ucond1}\ee
This gives a lower bound for the complementary event,
\be\begin{split}
\tfrac12 &\le \Pv\{ {\sc}^{(n)}(t)> \fl{V^\rho t}\}
\le \Pv\{m(t)\ge k\}
+\Pv\{ J^\zeta_{\fl{V^\rho t}}(t)-J^\eta_{\fl{V^\rho t}}(t) \ge -k\}. 
\end{split}\label{eq:aux36}\ee
The reasoning behind the second inequality above is as follows:  
${\sc}^{(n)}(t)> \fl{V^\rho t}$ and  $m(t)<k$ imply 
 $X_{k}(t)>  \fl{V^\rho t}$   and consequently   
$
-k\le  J^{\zeta-\eta}_{\fl{V^\rho t}}(t) = J^\zeta_{\fl{V^\rho t}}(t)-J^\eta_{\fl{V^\rho t}}(t)$.
 
 Put $k=\fl{a_2t^{1/3}\ep^{1/6}}-2$.  
 Observe from \eqref{mQ2} that $ \Pv\{m(t)\ge k\}\le e^{-2}<1/4 $
  follows from $a_2t^{1/3}\ep^{1/6} \ge 2\ep^{-1/2} +3$, which is guaranteed by
  $t\ge c_0\ep^{-2}$ and the definition of $a_2$. Hence
\begin{align}
\tfrac14&\le 
\Pv\{ J^\zeta_{\fl{V^\rho t}}(t)-J^\eta_{\fl{V^\rho t}}(t) \ge
-a_2t^{1/3}\ep^{1/6}+2\} \nn\\
&\quad \le 
\Pv\{ J^\zeta_{\fl{V^\rho t}}(t) \ge -2a_2t^{1/3}\ep^{1/6} 
-t\ep^{1/2}(2\rho\lambda-\lambda^2) +1 \} 
\nn\\
&\quad\quad + 
\Pv\{ J^\eta_{\fl{V^\rho t}}(t) \le  -a_2t^{1/3}\ep^{1/6} 
-t\ep^{1/2}(2\rho\lambda-\lambda^2)-1\}. 
\label{line19}
\end{align}
Consider  line \eqref{line19}.   The 
$\eta$ process can be coupled with a stationary $P^\lambda$-process
with at most one discrepancy. The mean current in
the stationary process is  
\begin{align*}
E^\lambda[ J_{\fl{V^\rho t}}(t)]
&= tH(\lambda)-\lambda\fl{V^\rho t}
\ge tH(\lambda)-\lambda{V^\rho t}
=-t\ep^{1/2}(2\rho\lambda-\lambda^2). 
\end{align*}
Hence 
\begin{align}
&\text{line \eqref{line19}} 
\le 
P^\lambda\{ J_{\fl{V^\rho t}}(t) \le  -a_2t^{1/3}\ep^{1/6} 
-t\ep^{1/2}(2\rho\lambda-\lambda^2)\}\nn\\[3pt]
&\le P^\lambda\bigl\{ \overline J_{\fl{V^\rho t}}(t) \le  -a_2t^{1/3}\ep^{1/6} \bigr\}
\le a_2^{-2}t^{-2/3}\ep^{-1/3}
\Var^\lambda\bigl[ J_{\fl{V^\rho t}}(t)\bigr]\nn\\[3pt]
&\le \frac{\Ev^\lambda\lvert {\sc}(t)-\fl{V^\rho t}\rvert}{a_2^2t^{2/3}\ep^{1/3}}
\; \le \; \frac{\Ev^\lambda\lvert {\sc}(t)-V^\lambda t\rvert}{a_2^2t^{2/3}\ep^{1/3}}
+ \frac{2 b }{a_2^2 }
+ \frac{ 1}{a_2^2t^{2/3}\ep^{1/3}}\nn\\
&\le C_{UB}a_2^{-2} + \tfrac1{16} + \tfrac1{64}\le \tfrac18.
\label{line22}
\end{align} 
After Chebyshev above  we applied the basic identity 
\eqref{JQ}  for which we introduced a second class particle 
${\sc}(t)$ in a density $\lambda$ system under the measure
 $\Pv^\lambda$. 
Then 
we replaced $\fl{V^\rho t}$ with $V^\lambda t$ and    applied  the upper
bound    and 
properties of $a_2$.  

Put this last bound back into line \eqref{line19} to get 
\be
\tfrac18\le 
\Pv(\mathcal A) \quad{\rm where} \quad \mathcal A=\{ J_{\fl{V^\rho t}}(t) \ge -2a_2t^{1/3}\ep^{1/6} 
-t\ep^{1/2}(2\rho\lambda-\lambda^2) +1 \}
\label{line24}\ee
Let $\gamma$
denote the distribution of the initial $\zeta(0)$ 
configuration described by (a)--(c) in the beginning
of this section. As before $\nu^\rho$ is the
density $\rho$ i.i.d.~Bernoulli measure.  The Radon-Nikodym
derivative is 
\[
\frac{d\gamma}{d\nu^\rho}(\omega)= 
\frac1{\rho}\ind\{\omega_{-n}=1\}\cdot
\prod_{i=-n+1}^0\Bigl( \frac{\lambda}{\rho}\ind\{\omega_i=1\}
+ \frac{1-\lambda}{1-\rho}\ind\{\omega_i=0\}\Bigr).
\]
Bound its second moment: 
\be\begin{aligned}
E^\rho[|\frac{d\gamma}{d\nu^\rho}|^2]&=\frac1\rho\Bigl(1+\frac{(\rho-\lambda)^2}{\rho(1-\rho)}
\Bigr)^n 
\le \rho^{-1}e^{n(\rho-\lambda)^2/\rho(1-\rho)}
\le  c_2(\rho) .
\end{aligned} \label{def:c1}\ee
Here condition $t\ge c_0\ep^{-2}$ implies a bound  $c_2(\rho)<\infty$  
 independent of $t$ and $\ep$.
 From \eqref{line24} and Schwarz's inequality 
\begin{align}
\tfrac18\le \Pv(\mathcal A)
&= \int P^\omega(\mathcal A)\,\gamma(d\omega) \nn\\ 
&= \int P^\omega(\mathcal A)\frac{d\gamma}{d\nu^\rho}(\omega)\,\nu^\rho(d\omega)
\le c_2(\rho)^{1/2} \bigl(P^\rho(\mathcal A)\bigr)^{1/2}.
\label{line26} 
\end{align}
 Note the stationary mean  
\begin{align*}
E^\rho\bigl[J_{\fl{V^\rho t}}(t)\bigr]
=-t\ep^{1/2}\rho^2+\rho V^\rho t-\rho\fl{V^\rho t}
\le - t\ep^{1/2}\rho^2+1.
\end{align*}
 Continue from line \eqref{line26}, recalling    
\eqref{ba_2}:   
\begin{align*}
(64c_2(\rho))^{-1}&\le P^\rho(\mathcal A)= P^\rho\{ J_{\fl{V^\rho t}}(t) \ge -2a_2t^{1/3}\ep^{1/6} 
-t\ep^{1/2}(2\rho\lambda-\lambda^2)+1 \}\\
&\le P^\rho\{ \,\overline J_{\fl{V^\rho t}}(t) \ge -2a_2t^{1/3}\ep^{1/6} 
+t\ep^{1/2}(\rho-\lambda)^2 \}\\
&= P^\rho\{ \,\overline J_{\fl{V^\rho t}}(t) 
\ge (  b^2-2a_2)t^{1/3}\ep^{1/6} \}   
\le P^\rho\{ \, \overline J_{\fl{V^\rho t}}(t) 
\ge t^{1/3}\ep^{1/6} \}\\
&\le  t^{-2/3}\ep^{-1/3}
\Var^\rho \bigl[J_{\fl{V^\rho t}}(t)\bigr] \; \le \; t^{-2/3} \ep^{-1/3}  \Ev^\rho\abs{{\sc}(t)-V^\rho t}. 
\end{align*}
This completes the proof of  the lower bound and thereby the proof of Theorem 
\ref{Qmomthm}.

\section{Appendix:  Properties of the solution}
\label{properties}
\setcounter{equation}{0}

Throughout this section $h(t,x)=-\log Z(t,x)$, where $Z(t,x)$ is the 
solution of (\ref{heat}) starting with a two sided Brownian motion $\{B(x):x\in\R\}$ with $B(0)=0$.  The goal is to
prove
\be \Var (h(t,x))-|x| \to 0 \quad\text{ as $\abs{x}\to\infty$.}
\label{auxh0a}\ee

Define the current of $u=\partial_x h$ across $x$ up to time $t$ by
\[
N(t,x) = h(t,x)- h(0,x)
\]
and the mass of $u$ in the interval $[0,x]$ at time $t$ by
\[
M(t,x) = h(t,x) - h(t,0).
\]
\begin{prop}
$
{\rm Var}( h(t,x) ) -|x| = \Cov  (N(t,0), N(t,x)) 
$.
\end{prop}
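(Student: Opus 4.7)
The plan is to derive this identity as the continuum counterpart of the discrete version \eqref{varminusx} established in Proposition \ref{prop1}. The setup rests on the basic decompositions $h(t,x)=N(t,0)+M(t,x)$ and $h(t,x)=N(t,x)-B(x)$ (using $h(0,\cdot)=-B(\cdot)$), together with the compatibility identity $N(t,x)-N(t,0)=M(t,x)+B(x)$. Two structural inputs are available: $\Var(M(t,x))=|x|$, from time-stationarity of spatial increments (part \textit{5} of Proposition \ref{thm1}), and $\Var(N(t,x))=\Var(N(t,0))$, from spatial stationarity of the dynamics started from Brownian initial data (shifting the origin to $x$ uses that Brownian increments are stationary).

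A direct expansion of variance then yields
\[
\Var(h(t,x)) - |x| \;=\; \Var(N(t,0)) + 2\Cov(N(t,0),M(t,x)),
\]
while expanding $\Cov(N(t,0),N(t,x))$ through the compatibility identity gives
\[
\Cov(N(t,0),N(t,x)) \;=\; \Var(N(t,0)) + \Cov(N(t,0),M(t,x)) + \Cov(N(t,0),B(x)).
\]
Subtracting, the claim reduces to the equality $\Cov(N(t,0),M(t,x))=\Cov(N(t,0),B(x))$, equivalently $\Cov(N(t,0),\,M(t,x)+M(0,x))=0$.

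To obtain this reduced identity I would pass to the $\eps\searrow 0$ limit from the discrete ASEP analogue. Using the elementary ASEP identity $\zeta_\eps(\tau,y)-\zeta_\eps(0,y)=-2N(\tau,y)$, the discrete relation \eqref{varminusx} rescales, upon substituting $\tau=\eps^{-2}t$, $y=[\eps^{-1}x]$ and multiplying through by $\eps$, to
\[
\Var(h_\eps(t,x))-|x|_\eps \;=\; \Cov\bigl(\,h_\eps(t,0)-h_\eps(0,0),\;h_\eps(t,x)-h_\eps(0,x)\,\bigr).
\]
The weak convergence $\mathcal{P}_\eps\to\mathcal{P}$ (part \textit{4} of Proposition \ref{thm1}) and the time-continuity of the limit $h$ give joint weak convergence of the quadruple $\bigl(h_\eps(0,0), h_\eps(0,x), h_\eps(t,0), h_\eps(t,x)\bigr)$ to its continuum counterpart at the fixed times $0$ and $t$. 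The uniform $L^2$ bound on $h_\eps(t,x)$ from the opening of Section \ref{proofs}, combined with the higher-moment bounds available from Theorem \ref{Qmomthma} for $m<3$, then allows one to upgrade the weak limit to convergence of the variances and of the covariance appearing above, yielding the desired continuum identity.

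The main technical obstacle is verifying the uniform integrability of the relevant squares and products under the weak limit, since uniform $L^2$ boundedness alone does not by itself permit convergence of second moments. This is where the $L^m$ control for $m<3$ enters: together with the identification $S_\eps(t,x)dx\leftrightarrow$ second-class particle distribution via \eqref{Ssc}, and with the i.i.d.\ structure of the initial spatial increments giving $\Var(h_\eps(0,x))=|x|_\eps$ exactly, it provides the room needed to pass second moments cleanly through the weak limit and obtain the equality (not merely an inequality) required for the identity.
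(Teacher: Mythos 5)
Your algebraic reduction is correct: expanding both sides with $h(t,x)=N(t,0)+M(t,x)$ and $N(t,x)=N(t,0)+M(t,x)-M(0,x)$ (and using $\Var(M(t,x))=|x|$) reduces the proposition to $\Cov(N(t,0),\,M(t,x)+M(0,x))=0$. This is essentially the same reduction the paper performs (its residual term is $\Cov(M(t,x),\,N(t,0)+N(t,x))$). The paper, however, then closes the argument \emph{entirely within the continuum}: it uses the conservation law, translation invariance, and the reflection symmetry $\Var(h(t,-x))=\Var(h(t,x))$ from part \textit{6} of Proposition \ref{thm1}, and observes that a shift by $x$ turns the residual covariance into its own negative, forcing it to vanish. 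No passage from the discrete model is used at this stage.

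The gap in your plan is the step in which you pass the rescaled discrete identity $\Var(h_\eps(t,x))-|x|_\eps = \Cov\bigl(h_\eps(t,0)-h_\eps(0,0),\,h_\eps(t,x)-h_\eps(0,x)\bigr)$ to the $\eps\searrow 0$ limit. That requires convergence of the second moments appearing on both sides, hence uniform integrability of $h_\eps(t,x)^2$ and of $N_\eps(t,0)\,N_\eps(t,x)$, which would need moment bounds for the rescaled current $N_\eps(t,0)$ of order strictly greater than $2$. Such bounds are precisely what is \emph{not} available; the paper says so in the proof of \eqref{td}: ``We are unable to do this by direct approximation due to lack of control of moments of $h_\eps(t,x)$ higher than $2$.'' The $m<3$ bounds of Theorem \ref{Qmomthma} that you invoke do not supply them: they control $\int|x|^m S_\eps(t,x)\,dx$, i.e.\ moments of the second-class particle, which by part \textit{4} of Proposition \ref{prop1} translate into weighted \emph{spatial integrals} of $\Var(h_\eps(t,x))-|x|_\eps$, not into higher-than-second moments of $h_\eps$ or $N_\eps$ at a fixed site. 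The i.i.d.\ structure gives you all moments of the spatial sum $M_\eps(t,x)$, but it says nothing about the time-integrated current $N_\eps(t,0)$, which is exactly the problematic factor. A repair in the spirit of your plan would be to pass only the \emph{reduced} discrete identity $\Cov(N_\eps(t,0),\,M_\eps(t,x)+M_\eps(0,x))=0$ to the limit, since there each product involves only one current factor (in $L^2$) against a factor with all moments, and H\"older gives a uniform $L^{1+\delta}$ bound and hence uniform integrability. But that is a materially different argument from what you wrote, and in any case the paper avoids the whole issue with the continuum symmetry argument.
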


\begin{proof}   Since $h(0,0)=0$ we have $h(t,x) = M(t,x) +N(t,0)$.  From the invariance
of white noise ($5$ of Prop \ref{thm1}),  ${\rm Var}(M(t,x))=|x|$.  Hence 
\begin{equation}\label{hg}
{\rm Var}(h(t,x))- |x| = {\rm Var}(N(t,0)) +2 \Cov (M(t,x), N(t,0)).
\end{equation}
We claim that
\be\begin{aligned}
\Cov (M(t,x), N(t,0)) &=  - {\rm Var}(N(t,0)) 
+ \Cov (M(t,x), N(t,x)) \\
&\qquad +\Cov (N(t,x), N(t,0)).
\end{aligned}\label{hh}\ee
To see this, note that we always have the conservation law \[N(t,x)-N(t,0)= M(t,x)-M(0,x).\]  Hence
\begin{align*}  \Cov (M(t,x), N(t,0)) &=   
\Cov (M(t,x), N(t,x)) \\
&\qquad -  \Cov (M(t,x), (M(t,x)- M(0,x))).\end{align*}   But 
 $\Cov (M(t,x), (M(t,x)- M(0,x))) = \frac12 {\rm Var}(M(t,x)- M(0,x))$.  By the conservation law again
$\frac12 {\rm Var}(M(t,x)- M(0,x))=  \frac12 {\rm Var}(N(t,x)-N(t,0))$.  Finally, by the translation invariance,
$\frac12 {\rm Var}(N(t,x)-N(t,0))=  {\rm Var}(N(t,0))-\Cov (N(t,x), N(t,0))$.  This gives (\ref{hh}).

 From (\ref{hh}) we can rewrite the right hand side of (\ref{hg}) as
\begin{equation}\Cov (N(t,x), N(t,0))+ \Cov (M(t,x), N(t,0)+ N(t,x)).
\end{equation} 
The proof is completed by noting that the second term vanishes by symmetry.  To see it, note that 
${\rm Var}(h(t,-x))= {\rm Var}(h(t,x))$ and by translation invariance $\Cov (N(t,-x), N(t,0))= \Cov (N(t,x), N(t,0))$.  Hence  $\Cov (M(t,x), N(t,0)+ N(t,x))=  \Cov (M(t,-x), N(t,0)+ N(t,-x))$.  But translating by $x$ gives
$ \Cov (M(t,-x), N(t,0)+ N(t,-x))= 
 \Cov (-M(t,x), N(t,x)+ N(t,0))$.
\end{proof}

 \begin{prop} \label{14h} $\lim_{|x|\to \infty} \Cov  ( N(t,0), N(t,x) ) = 0.$
\end{prop}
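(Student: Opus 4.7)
The plan is to prove $\Cov(N(t,0),N(t,x))\to 0$ by showing that, for large $|x|$, the currents $N(t,0)$ and $N(t,x)$ become asymptotically independent because each is (up to a small error) measurable with respect to the noise $\dot W$ and initial data $B$ in a window of size $O(\sqrt t)$ around the respective spatial point. The Feynman--Kac representation provides the natural vehicle for making this localization precise.

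Step 1 (Feynman--Kac and reformulation). Write $N(t,x)=-\log[Z(t,x)/Z(0,x)]$ and use part 2 of Proposition \ref{thm1} (in the $\kappa\to 0$ limit) to represent
\[
\frac{Z(t,x)}{Z(0,x)}=E^\nu_x\Bigl[:\!\exp\bigl\{-\textstyle\int_0^t\dot W(s,b(s))\,ds\bigr\}\!:\;\exp\{B(b(t))-B(x)\}\Bigr],
\]
where $b$ is a Brownian motion started at $x$, independent of $(B,W)$. Crucially, after dividing by $Z(0,x)$ the expression depends on the initial data $B$ only through its increments from $x$, and on $\dot W$ only along the path $b$.

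Step 2 (Localization). Fix $R>0$ and let $Z^R(t,x)/Z(0,x)$ denote the same expectation restricted to the event $\{\sup_{0\le s\le t}|b(s)-x|\le R\}$, and put $N^R(t,x)=-\log[Z^R(t,x)/Z(0,x)]$. The random variable $N^R(t,x)$ is measurable with respect to $\dot W$ restricted to $[0,t]\times[x-R,x+R]$ and the increments $\{B(y)-B(x):|y-x|\le R\}$. Using the Gaussian estimate $P(\sup_{s\le t}|b(s)-x|>R)\le Ce^{-cR^2/t}$ together with moment bounds on the Wick exponential, one controls $E[(Z(t,x)/Z(0,x)-Z^R(t,x)/Z(0,x))^2]$ by a quantity $\eta(R)$ independent of $x$ with $\eta(R)\to 0$. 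Combined with positivity and inverse-moment bounds on $Z^R(t,x)/Z(0,x)$, this promotes to an $L^2$-bound
\[
\sup_{x\in\R}E\bigl[\,(N(t,x)-N^R(t,x))^2\,\bigr]\le \rho(R),\qquad \rho(R)\to 0 \text{ as } R\to\infty.
\]

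Step 3 (Independence and Cauchy--Schwarz). For $|x|>2R$ the windows around $0$ and around $x$ are disjoint, so $N^R(t,0)$ and $N^R(t,x)$ are independent (using independence of $\dot W$ across disjoint space-time sets and the independent-increments property of $B$). Hence $\Cov(N^R(t,0),N^R(t,x))=0$, and by Cauchy--Schwarz together with the uniform $L^2$-bound of Step 2,
\[
|\Cov(N(t,0),N(t,x))|\le 2\sqrt{\rho(R)}\cdot\sqrt{\Var(N(t,0))+\rho(R)}.
\]
Letting $|x|\to\infty$ first and then $R\to\infty$ yields the claim.

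Main obstacle. The delicate point is Step 2: passing the localization error through the logarithm. This requires uniform (in $R$ and $x$) lower-tail control on $Z^R(t,x)/Z(0,x)$, since $\log$ is unbounded near $0$. These estimates can be obtained from Mueller-type positivity arguments for the stochastic heat equation, combined with the fact that truncating the Feynman--Kac to paths confined to $[x-R,x+R]$ still captures the dominant mass of order 1 (because $\sqrt t$ is bounded while $R$ is large), but making this rigorous and $x$-uniform is the heart of the work.
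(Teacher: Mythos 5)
Your overall strategy (localize each current to a window, exploit independence for disjoint windows, then Cauchy--Schwarz) matches the paper's, but the localization mechanism you choose is genuinely different, and that difference has real consequences.

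The paper does not restrict Brownian paths inside a Feynman--Kac representation. Instead, Lemma~\ref{lemmaone} works directly with the Duhamel (mild) formulation of the stochastic heat equation: one couples two noise/initial-data pairs $(W_1,B_1)$ and $(W_2,B_2)$ to agree on $(-\infty,R)$ and be independent on $(R,\infty)$, and shows by a Picard-type iteration that $E[(Z_1(t,x)-Z_2(t,x))^2]\le Ce^{-R+C(t+|x|)}$. Then Proposition~\ref{14h} is proved by inserting a \emph{third} coupled copy $(W_3,B_3)$ so that $(W_1,B_1)$ and $(W_3,B_3)$ are genuinely independent, and comparing $E[\bar N_1(t,0)\bar N_1(t,x)]$ to $E[\bar N_1(t,0)\bar N_3(t,x)]=0$ via two Cauchy--Schwarz steps. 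Your Feynman--Kac path truncation is a natural alternative, but it introduces two sources of difficulty the paper avoids. First, the Wick exponential $:\!\exp\!:$ in the un-mollified Feynman--Kac formula is itself a chaos-expansion object, and making Step 2's ``moment bounds on the Wick exponential'' rigorous requires work that the Duhamel route bypasses entirely. Second, and more importantly, your Step 2 asks for \emph{uniform} inverse-moment control of $Z^R(t,x)/Z(0,x)$ to push the $L^2$ error through $\log$; but $Z^R$ is a Dirichlet-truncated object, not a full-line SHE solution, so Mueller's positivity does not apply off the shelf. The paper circumvents the need for any inverse moments with a cheaper device: truncate the logarithm, $\log_L z=\max(\log z,-\log L)$, which is Lipschitz with constant $L$; for each fixed $L$ the $L^2$ bound on $Z_1-Z_2$ transfers directly, and the residual $E[\mathbf{1}_{\{0<Z_1<L^{-1}\}}(\log Z_1)^2]\to 0$ as $L\to\infty$ by dominated convergence using only $E[(\log Z)^2]<\infty$. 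You correctly flag the passage through $\log$ as the heart of the matter, but the $\log_L$ trick resolves it with much weaker hypotheses than the uniform lower-tail control your plan calls for, and you should use it in place of inverse-moment bounds.
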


The two propositions combine to prove 
\eqref{auxh0a}.


\medskip

The proof of Proposition \ref{14h} is based on the following lemma.  We need some notation.
Fix $R>0$ and let $ W_1(t,x),  W_2(t,x)$ be cylindrical Wiener processes and $B_1(x), B_2(x)$  two sided Brownian motions with $B_1(0)=B_2(0)=0$, coupled as follows:   For any $\varphi \in C_c^\infty(\mathbb R)$ supported in 
$(-\infty,R)$, $\langle \varphi, W_1(t)\rangle = \langle \varphi, W_2(t)\rangle$ are independent of $\int \varphi dB_1= \int\varphi dB_2$, while for any  $\varphi \in C_c^\infty(\mathbb R)$ supported in 
$(R,\infty)$, $\langle \varphi, W_1(t)\rangle$, $ \langle \varphi, W_2(t)\rangle$, $\int \varphi dB_1$ and $ \int\varphi dB_2$ are independent. We will say that $(W_1,dB_1)$ and $(W_2,dB_2)$ are the same on $(-\infty,R)$ and independent on $(R,\infty)$.  
\begin{lemma}\label{lemmaone} 
Let $Z_i(t,x)$, $i=1,2$,  be the solutions of (\ref{heat}) with $W_i$, $i=1,2$ and 
initial data $Z_i(0,x) = \exp\{ B_i(x) \}$, where $(W_1,dB_1)$ and $(W_2,dB_2)$ are the same on $(-\infty,R)$ and independent on $(R,\infty)$.
 Then there is a finite $C$ such that for $R\ge |x|+ 2t$, \begin{equation}
E[ ( Z_1(t,x)- Z_2(t,x))^2 ]\le Ce^{-R + C( t+|x|)} .
\end{equation}
\end{lemma}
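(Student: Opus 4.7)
The plan is to derive an integral identity for $G(t,x) := E[(Z_1(t,x)-Z_2(t,x))^2]$ and then iterate. Decompose $W_i = W + V_i$, where $W$ is the common cylindrical Wiener process on $(-\infty,R)$ and $V_1, V_2$ are the independent Wiener processes on $(R,\infty)$, and note that $B_1$ and $B_2$ have identical increments on $(-\infty,R)$ (so $e^{B_1(y)}-e^{B_2(y)}$ is supported in $y>R$). With $D := Z_1 - Z_2$, $p_t$ the heat kernel, and $\gamma := \lambda\nu^{-1}\sigma$, the mild form of the SHE becomes
\[
D(t,x) = A_0(t,x) + \gamma \int_0^t\!\!\int_{-\infty}^R p_{t-s}(x-y) D(s,y) dW + \gamma \int_0^t\!\!\int_R^\infty p_{t-s}(x-y)\bigl[Z_1 dV_1 - Z_2 dV_2\bigr],
\]
where $A_0(t,x) := \int_R^\infty p_t(x-y)(e^{B_1(y)} - e^{B_2(y)})dy$ is $\mathcal{F}_0$-measurable.

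Squaring and taking expectation, all cross terms vanish: $A_0 \in \mathcal F_0$, so its cross terms with any It\^o integral are zero; and the pairwise independence of $W, V_1, V_2$ kills the cross terms among the stochastic integrals (the cross variations vanish). By It\^o's isometry,
\[
G(t,x) = H(t,x) + \gamma^2 \int_0^t\!\!\int_{-\infty}^R p_{t-s}^2(x-y) G(s,y) dy ds,
\]
where the source is
\[
H(t,x) := E[A_0(t,x)^2] + \gamma^2 \int_0^t\!\!\int_R^\infty p_{t-s}^2(x-y) \bigl(E[Z_1(s,y)^2]+E[Z_2(s,y)^2]\bigr) dy ds.
\]
I would then bound $H$ directly, using the standard second-moment estimate $E[Z_i(s,y)^2]\le C e^{Cs+C|y|}$ (which follows from the Feynman-Kac representation featuring two independent Brownian motions interacting through a local-time potential, with Brownian-motion initial data) together with Chernoff tail bounds on integrals of the form $\int_R^\infty p_\tau(x-y)e^{cy}dy$; under the hypothesis $R\ge|x|+2t$ the Gaussian tail $\exp(-(R-x)^2/(C\tau))$ can be traded for an exponential tail of the desired form $Ce^{-R+C(t+|x|)}$ by optimizing the Chernoff exponent at scale $\mu \asymp 1$.

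Finally, I would close the estimate by iterating. Writing $K$ for the integral operator with kernel $\gamma^2 p_{t-s}^2(x-y)\mathbf 1_{y<R}$, the identity gives $G=\sum_{n\ge 0} K^n H$. The bounds $\int p_{t-s}^2(z)e^{C|z|}dz \le C'(t-s)^{-1/2}\exp(C^2\nu(t-s)/2)$ combined with the beta-function identity $\int_0^t (t-s)^{-1/2} s^{(n-1)/2}ds = B(1/2,(n+1)/2) t^{n/2}$ produce a factorial denominator in $\|K^n H\|$ that makes the Neumann series converge to something of the form $Ce^{-R+C(t+|x|)}$. The main obstacle will be controlling this iteration against the competing exponentials: after multiplying by the weight $e^{-C(t+|x|)}$ the kernel of $K$ picks up an extra factor of $\exp(C^2\nu(t-s)/2)$ from Gaussian integration, so a naive weighted sup-norm does not immediately close. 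The cleanest remedy I see is to work in a weighted norm $\|f\|_{C,R} := \sup_{(t,x)} e^{R-C(t+|x|)}|f(t,x)|$, take $C$ large enough (depending on $\nu,\sigma,\lambda$) that $K$ is a contraction on a short time interval, and then extend to arbitrary $t$ by iteration in time, absorbing the final constants into the statement.
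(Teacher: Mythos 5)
Your proof is correct and follows essentially the same route as the paper: mild (Duhamel) form of the SHE, an a priori second-moment bound $E[Z_i^2(t,x)]\le Ce^{C(t+|x|)}$, a renewal-type equation for $E[(Z_1-Z_2)^2]$ whose source term is exponentially small because all mismatch between the two solutions is confined to $y>R$, and an iteration to absorb the kernel over $(-\infty,R)$. The differences — your explicit split $W_i=W+V_i$ yielding an It\^o-isometry \emph{identity} where the paper uses a Schwarz inequality, your Feynman--Kac route to the a priori bound versus the paper's self-contained iterate-once argument, and your Neumann series versus the paper's iterate-once-then-parabolic-maximum-principle device — are cosmetic, and the short-time/restart machinery you invoke to handle the competing exponentials can be bypassed by weighting $t$ and $|x|$ with separate rates $C'$ and $C$: since $\int p_\tau^2(x-y)e^{C|y|}\,dy\lesssim \tau^{-1/2}e^{C|x|+aC^2\tau}$, the induced operator is a global-in-time contraction once $C'>aC^2$ is large enough, which is in effect what the paper's maximum-principle step delivers.
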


\begin{proof}   Let $p(t,x) = \frac1{\sqrt{2\pi t}} e^{-x^2/2t}$ be the heat kernel.  We can write
\begin{equation}\label{for}
Z_i(t,x) =\!\!\! \int \! p(t,x-y) e^{ B_i(y)} dy - \int \int_0^t p(t-s,x-y) Z_i(s,y)W_i(dsdy).
\end{equation}
First we obtain a preliminary bound
on $E[ Z^2_i(t,x)]$.  By Schwarz's inequality  it is bounded above by  (dropping the $i$ for clarity),
\begin{equation}2 E[ (\int\!\! p(t,x-y) e^{ B(y)} dy)^2] + 2E[ (  \int \!\!\int_0^t p(t-s,x-y) Z(s,y)W(dsdy))^2].\nonumber
\end{equation}
By Jensen's inequality,    \[ E[ (\int p(t,x-y) \exp\{ B(y)\} dy)^2]\le \int p(t,x-y) e^{2|y|} dy.
\] 
 Furthermore,
\begin{align*}
&E[ (  \int\int_0^t  p({\scriptstyle{t-s,x-y}}) Z(s,y)W(dsdy))^2]\\
&\qquad=\int \int_0^t p^2({\scriptstyle{t-s,x-y}}) E[ Z^2(s,y)] dsdy\nonumber \\
&\qquad\le \int \int_0^t {\scriptstyle\frac1{\sqrt{ \pi (t-s)}}} p({\scriptstyle{t-s,x-y}}) E[ Z^2(s,y)] dsdy. 
\end{align*}
Call $g(t,x)=E[ Z^2_i(t,x)]$ and let $P_{t}$ denote the heat semigroup.  $g(0,x)= e^{2|x|}$ and have shown that
\begin{equation}
g(t) \le P_t g(0) + \int_0^t {\scriptstyle\frac1{\sqrt{ \pi (t-s)}}} P_{t-s} g(s) ds.
\end{equation}
Iterating once we see that this is bounded above by
\begin{equation}
(1+ s\sqrt{ t/\pi })  P_tg(0) +   \int_0^t {\scriptstyle\frac1{\sqrt{ \pi (t-s)}}} P_{t-s}  \int_0^s {\scriptstyle\frac1{\sqrt{ \pi (s-u)}}} P_{s-u} g(u) du ds.\end{equation}
The last term can be simplified by noting that $P_{t-s}P_{s-u}=P_{s-u}$, applying Fubini's theorem, and using
$\int_0^s {\scriptstyle\frac{du}{\sqrt{(t-s)(s-u)}}} = \pi$.  The result is
\begin{equation}\label{in6}
g(t)\le (1+ s\sqrt{ t/\pi }) P_tg(0) + \int_0^t  P_{t-s}   g(s)  ds.
\end{equation}
If we let $\bar g(0)=g(0)$ and $\bar g(t)$ satisfy (\ref{in6}) with 
equality instead of inequality, then $\bar g- g$ satisfies $(\bar g- g)(t)\ge  \int_0^t  P_{t-s}   (\bar g- g)(s)  ds$ with $(\bar g- g)(0)=0$.
By the maximum principle for the heat equation, $g\le \bar g$.  $\bar g(t,x)$ is readily computed with the result that for some finite $C$,
\begin{equation}\label{prelim}
E[ Z^2_i(t,x)]=g(t,x)\le C e^{ C( t + |x|) }.
\end{equation}

By (\ref{for}) again  we have 
$
f(t,x): = E[ ( Z_1(t,x)-Z_2(t,x))^2] 
$ is bounded above by twice
\begin{eqnarray}
&\hskip-1.1in E\Big[ \Big( \int p(t,x-y) \big(\exp\{ B_1(y)\}  \exp\{ B_2(y)\}\big) dy\Big)^2 \Big]&\label{p1}\\\label{p2}
&\hskip-.2in+E\Big[ \Big(\int\!\! \int_0^t p({\scriptstyle{t-s,x-y}}) \big(Z_1({\scriptstyle{s,y}})W_1(dsdy) - Z_2({\scriptstyle{s,y}})W_2(dsdy)\big)\Big)^2 \Big].&
\end{eqnarray}Explicit computation gives that (\ref{p1})  is equal to
\begin{equation}
 \int_R^\infty \!\!\! \int_R^{\infty} \!\!\! ~p(t,x-y_1)p(t,x-y_2) E[ (e^{B_1(y_1)}-e^{B_2(y_1)}) (e^{B_1(y_2)}-e^{B_2(y_2)}) ]dy_1dy_2\nonumber\end{equation} By Schwarz's inequality, 
$
(\ref{p1}) \le 2  \int p(t,x-y)1_{\{y\ge R\}} e^{2y} dy $.
Another explicit computation gives that (\ref{p2}) is equal to \begin{eqnarray}
  & & \hskip-.4in\int_0^t \int_{-\infty}^R p^2({\scriptstyle{t-s,x-y}}) f(s,y) dy ds+
2\int_0^t \int_R^\infty p^2({\scriptstyle{t-s,x-y}}) E[Z^2_1(s,y)] dy ds\nonumber\\
& \!\!\!\le \!\!\! & \int_0^t {\scriptstyle\frac1{\sqrt{\pi(t-s)}}}\int p({\scriptstyle{t-s,x-y}}) (f(s,y) +
1_{\{y\ge R\}} Ce^{C( s+|y|)}) dy ds.\nonumber
\end{eqnarray}
%
Hence $f(t)$ satisfies the same equation as $g(t)$ in (\ref{in6})
except that this time $f(0,x) \le 1_{\{y\ge R\}} e^{2y}$.
The same argument now shows that
there is a finite $C$ such that  $f(t,x) \le Ce^{-R + C( t+|x|)}$ for $R\ge |x|+ 2t$.
\end{proof}

\begin{proof}[Proof of Proposition \ref{14h}] Let us use the notation $\bar N(t,x)$ for the normalized current $N(t,x) - E[N(t,x)]$.  First of all note that by (\ref{bdsonh}) and $5$ of Proposition \ref{thm1}, $E[\bar N^2(t,x)]\le C(t)$ and does not depend on $x$.  
Now let $(W_1,dB_1)$, $(W_2,dB_2)$, $(W_3,dB_3)$ be coupled so that $(W_1,dB_1)$ and $(W_2,dB_2)$ are the same on $(-\infty, x/2)$ and independent on $(x/2,\infty)$,  $(W_2,dB_2)$ and $(W_3,dB_3)$ are the same on $( x/2,\infty)$ and independent on $(-\infty, x/2)$, and $(W_1,dB_1)$ and $(W_3,dB_3)$ are  independent.  Let $\bar N_1$, $\bar N_2$, $\bar N_3$ be the currents 
corresponding to the three different pairs.  Of course $\Cov  ( N(t,0), N(t,x) ) =E[ \bar N_1(t,0) \bar N_1(t,x)]$.  By Schwarz's inequality 
\begin{equation}
|E[ \bar N_1(t,0) \bar N_1(t,x)] - E[ \bar N_2(t,0) \bar N_1(t,x)]|\le C |E[( \bar N_1(t,0)-\bar N_2(t,0))^2]|^{1/2}
\end{equation}
and 
\begin{equation}
|E[ \bar N_1(t,0) \bar N_2(t,x)] - E[ \bar N_1(t,0) \bar N_3(t,x)]|\le C |E[( \bar N_2(t,x)-\bar N_3(t,x))^2]|^{1/2}.
\end{equation}
By independence, $E[ \bar N_1(t,0) \bar N_3(t,x)]=0$.  By symmetry, 
$$E[ \bar N_2(t,0) \bar N_1(t,x)]=E[ \bar N_1(t,0) \bar N_2(t,x)] .$$  
Hence 
\begin{equation}
\Cov  ( N(t,0), N(t,x) )  \le C (E[(  h_1(t,0)- h_2(t,0))^2])^{1/2}.
\end{equation}

For each $L>0$, let $\log_Lz = \log z$ for $z\ge L^{-1}$ and $\log_L z = -\log L$ for $0< z <L^{-1}$.
We have
\begin{eqnarray}
E[(  h_1(t,0)- h_2(t,0))^2] & \le &2 E[(  \log_L Z_1(t,0)-  \log_L Z_2(t,0))^2]\nonumber\\ && + 4E[ 1_{\{0<Z_1<L^{-1}\}} (\log Z_1(t,0))^2].
\end{eqnarray} 
Because $\log_L$ is Lipschitz with constant $L$ we have
\begin{equation}
E[ (  \log_L Z_1(t,0)-  \log_L Z_2(t,0))^2]\le L^2E[ (  Z_1(t,0)- Z_2(t,0))^2].
\end{equation}
By Lemma \ref{lemmaone}, for each fixed $L$, this vanishes as $|x|\to \infty$.  On the other hand, since $E[ (\log Z_1(t,0))^2]<\infty$, by the dominated convergence theorem, 
\begin{equation}
\lim_{L\to \infty} E[ 1_{\{0<Z_1<L^{-1}\}} (\log Z_1(t,0))^2]=0.
\end{equation}
This completes the proof.
\end{proof}

\bibliographystyle{plain}



\bibliography{bqsscalingrefs}

\begin{thebibliography}{10}

\bibitem{BDJ}
J.~Baik, P.~Deift, and K.~Johansson.
\newblock On the distribution of the length of the longest increasing
  subsequence of random permutations.
\newblock {\em J. Amer. Math. Soc.}, 12:1119-- 1178, 1999.

\bibitem{bala-sepp-07JSP}
M.~Bal{\'a}zs and T.~Sepp{\"a}l{\"a}inen.
\newblock Exact connections between current fluctuations and the second class
  particle in a class of deposition models.
\newblock {\em J. Stat. Phys.}, 127(2), 2007.

\bibitem{bala-sepp-08ALEA}
M.~Bal{\'a}zs and T.~Sepp{\"a}l{\"a}inen.
\newblock Fluctuation bounds for the asymmetric simple exclusion process.
\newblock {\em ALEA Lat. Am. J. Probab. Math. Stat.}, 6:1--24, 2009.

\bibitem{BSt}
A.-L. Barabasi and H.~E. Stanley.
\newblock {\em Fractal concepts in surface growth}.
\newblock Cambridge University Press, Cambridge, 1995.

\bibitem{BG}
Lorenzo Bertini and Giambattista Giacomin.
\newblock Stochastic {B}urgers and {KPZ} equations from particle systems.
\newblock {\em Comm. Math. Phys.}, 183(3):571--607, 1997.

\bibitem{BTV}
S.~Bezerra, S.~Tindel, and F.~Viens.
\newblock Superdiffusivity for a brownian polymer in a continuous gaussian
  environment.
\newblock {\em Ann. Probab.}, 36(5):1642--1675, 2008.

\bibitem{Bill}
P.~Billingsley.
\newblock {\em Convergence of probability measures}.
\newblock Wiley, 1968.

\bibitem{Ch}
Terence Chan.
\newblock Scaling limits of {W}ick ordered {KPZ} equation.
\newblock {\em Comm. Math. Phys.}, 209(3):671--690, 2000.

\bibitem{FS}
P.~L. Ferrari and H.~Spohn.
\newblock Scaling limit for the space-time covariance of the stationary totally
  asymmetric simple exclusion process.
\newblock {\em Comm. Math. Phys.}, 265(1):1--44, 2006.

\bibitem{FNS}
Dieter Forster, David~R. Nelson, and Michael~J. Stephen.
\newblock Large-distance and long-time properties of a randomly stirred fluid.
\newblock {\em Phys. Rev. A (3)}, 16(2):732--749, 1977.

\bibitem{HOUZ}
H.~Holden, B.~{\O}ksendal, J.~Ub{\o}e, and T.~Zhang.
\newblock {\em Stochastic partial differential equations. A modeling, white
  noise functional approach}.
\newblock Birkhauser Boston, Boston, 1996.

\bibitem{J}
K.~Johansson.
\newblock Transversal fluctuations for increasing subsequences on the plane.
\newblock {\em Probab. Theory Related Fields}, 116:445--456, 2000.

\bibitem{KPZ}
K.~Kardar, G.~Parisi, and Y.Z. Zhang.
\newblock Dynamic scaling of growing interfaces.
\newblock {\em Phys. Rev. Lett.}, 56:889--892, 1986.

\bibitem{KS}
H.~Krug and H.~Spohn.
\newblock {\em Kinetic roughenning of growing surfaces}, pages 412--525.
\newblock Cambridge Univ. Press., 1991.

\bibitem{LNP}
C.~Licea, C.~Newman, and M.~Piza.
\newblock Superdiffusivity in first-passage percolation.
\newblock {\em Prob. Th.Rel.Fields}, 106:559--591, 1996.

\bibitem{M}
O.~Mejane.
\newblock Upper bound of a volume exponent for directed polymers in a random
  environment.
\newblock {\em Ann. Inst. H. Poincar'e Probab. Statist.}, 40:299--308, 2004.

\bibitem{Mu}
C.~Mueller.
\newblock On the support of solutions to the heat equation with noise.
\newblock {\em Stochastics Stochastics Rep.}, 37(4):225--245, 1991.

\bibitem{NR}
David Nualart and Boris Rozovskii.
\newblock Weighted stochastic {S}obolev spaces and bilinear {SPDE}s driven by
  space-time white noise.
\newblock {\em J. Funct. Anal.}, 149(1):200--225, 1997.

\bibitem{P}
M.~Piza.
\newblock Directed polymers in a random environment: Some results on
  fluctuations.
\newblock {\em J. Statist. Phys.}, 89:581--603, 1997.

\bibitem{PS}
M.~Pr{\"a}hofer and H.~Spohn.
\newblock {\em Current fluctuations for the totally asymmetric simple exclusion
  process}.
\newblock Progress in Probability. Birkhauser, 2002.

\bibitem{DePZ}
G.~Da Prato and J.~Zabczyk.
\newblock {\em Stochastic equations in infinite dimensions}.
\newblock Cambridge University Press, Cambridge, 1992.

\bibitem{QV}
J.~Quastel and B.~Valk{\'o}.
\newblock $t\sp {1/3}$ superdiffusivity of finite-range asymmetric exclusion
  processes on $\mathbb z$.
\newblock {\em Comm. Math. Phys.}, 273(2):379--394, 2007.

\bibitem{W}
J.~Walsh.
\newblock {\em An introduction to stochastic partial differential equations},
  volume 1180 of {\em Lecture Notes in Mathematics}, pages 265--439.
\newblock Springer-Verlag, 1986.

\end{thebibliography}

\end{document}